\documentclass[12pt, reqno, twoside, letterpaper]{amsart}


\usepackage{amsmath,amssymb,amsbsy,amsfonts,amsthm,latexsym,
	amsopn,amstext,amsxtra,euscript,amscd,stmaryrd,mathrsfs,
	cite,array}

\numberwithin{equation}{section}


\usepackage{color}
\usepackage[usenames,dvipsnames,svgnames,table]{xcolor}



\def\eps{\varepsilon}

\def\fl#1{\left\lfloor#1\right\rfloor}

\def\({\left(}
\def\){\right)}

\newcommand{\e}{\ensuremath{\mathbf{e}}}


\newcommand{\cL}{\ensuremath{\mathcal{L}}}

\newcommand{\cT}{\ensuremath{\mathcal{T}}}




\newcommand{\NN}{\ensuremath{\mathbb{N}}}
\newcommand{\PP}{\ensuremath{\mathbb{P}}}

\newcommand{\RR}{\ensuremath{\mathbb{R}}}
\newcommand{\ZZ}{\ensuremath{\mathbb{Z}}}




\linespread{1.15}



\usepackage[margin=2.5cm]{geometry}



\usepackage{amsthm}
\newtheoremstyle{customthm}
{1em}                    
{1em}                    
{\itshape}               
{}                       
{\scshape}               
{.}                      
{5pt plus 1pt minus 1pt} 
{}                       

\newtheoremstyle{customrem}
{1em}                    
{1em}                    
{}                       
{}                       
{\scshape}               
{.}                      
{5pt plus 1pt minus 1pt} 
{}                       

\theoremstyle{customthm}

\newtheorem{X}{X}[section]
  
\newtheorem{lemma}[X]{Lemma}
\newtheorem{proposition}[X]{Proposition}

\newtheorem{Y}{Y}
\newtheorem{theorem}[Y]{Theorem}  

\newtheorem{Z}{Z}
\newtheorem{corollary}[Z]{Corollary}

\theoremstyle{customrem}

\usepackage{etoolbox}
\AtEndEnvironment{remark}{\null\hfill\qedsymbol}

\AtEndEnvironment{definition}{\null\hfill\qedsymbol}


\renewcommand{\le}{\ensuremath{\leqslant}}
\renewcommand{\ge}{\ensuremath{\geqslant}}



\renewcommand{\pod}[1]{\mathchoice
	{\allowbreak \if@display \mkern 5mu\else \mkern 5mu\fi (#1)}
	{\allowbreak \if@display \mkern 5mu\else \mkern 5mu\fi (#1)}
	{\mkern4mu(#1)}
	{\mkern4mu(#1)}
}


\newcommand*{\defeq}{\mathrel{\vcenter{\baselineskip0.5ex \lineskiplimit0pt
			\hbox{\scriptsize.}\hbox{\scriptsize.}}}%
	=}





\DeclareSymbolFont{EUEX}{U}{euex}{m}{n}

\DeclareSymbolFont{euexlargesymbols}{U}{euex}{m}{n}
\DeclareMathSymbol{\intop}{\mathop}{euexlargesymbols}{"52}
\def\int{\intop\nolimits}

\DeclareSymbolFont{euexsymbols}     {U}{euex}{m}{n}
\DeclareMathSymbol{\smallint}{\mathop}{euexsymbols}{"52}

\allowdisplaybreaks


\title[Additive problems on $\lfloor p^c \rfloor$]
{Additive problems on $\lfloor p^c \rfloor$}

\author[Lingyu Guo]{Lingyu Guo}

\address{School of Mathematics and Statistics, Xi'an Jiaotong University, Xi'an,Shaanxi,China.}

\email{guo.lingyu@foxmail.com}

\author[Victor Zhenyu Guo]{Victor Zhenyu Guo}

\address{School of Mathematics and Statistics, Xi'an Jiaotong University, Xi'an,Shaanxi,China.}

\email{vzguo@foxmail.com; guozyv@xjtu.edu.cn}

\author[Li Lu]{Li Lu}

\address{School of Mathematics and Statistics, Xi'an Jiaotong University, Xi'an,Shaanxi,China.}

\email{lilu\_math@foxmail.com}

\date{\today}

\begin{document}
	
\begin{abstract}

The sequence
$$
\mathbb{P}^{(c)}=(\lfloor p^c \rfloor)_{p\in \mathbb{P}}\quad (c>0,c\notin \mathbb{N}),
$$
is an important subsequence of the well-known Piatetski-Shapiro sequence, where $\PP$ is the set of prime numbers and $\lfloor \cdot \rfloor$ is the floor function. We prove that for all $c \in (0, 13/15)$, any large enough integer $N$ can be represented as
	$$
	N=\lfloor p^c\rfloor+q,
	$$
where $p$ and $q$ are primes. We also prove the result holds for almost all fixed positive $c \in \mathbb{R}\setminus\mathbb{Z}$. Moreover, we investigate shifted primes in this sequence, obtaining an asymptotic formula for all $c \in (0, 13/15)$ and an almost-all result for fixed positive $c \in \mathbb{R}\setminus\mathbb{Z}$.

\end{abstract}
	
\maketitle
	
\begin{quote}
\textbf{MSC Numbers:} 11N05; 11B83; 11P32.
\end{quote}

\begin{quote}
\textbf{Keywords:} Piatetski-Shapiro sequences; Primes; Exponential sums; Zero density estimate. 
\end{quote}
	
\section{Introduction}
\subsection{Motivation}
We are interested in the following sequence which is 
$$
\PP^{(c)}=(\lfloor p^c \rfloor)_{p\in \mathbb{P}}\quad (c>0,c\notin \NN),
$$
where $\PP=\{2,3,5,\cdots\}$ is the set of prime numbers and $\lfloor \cdot \rfloor$ is the floor function. Note that the set $\mathbb{P}^{(c)}$ is highly related to the Piatetski-Shapiro sequences of the form
$$
\NN^{(c)}=(\lfloor n^c \rfloor)_{n\in \mathbb{N}}\quad (c>0,c\notin \NN),
$$
which is named in honor of Piatetski-Shapiro, who showed (cf.\cite{PS}) that for any fixed $c \in (1, 12/11)$ there are infinitely many primes in $\NN^{(c)}$. The admissible range of $c$ for this result has been extended many times over the years, and currently it is known to hold for all $c \in (1, 243/205)$ thanks to Rivat and Wu \cite{J.Wu}.

Many authors have studied arithmetic properties of Piatetski-Shapiro sequences (see Baker et al.\cite{Baker} and the references contained therein), and it is natural to ask whether certain properties also hold on special subsequences of the Piatetski-Shapiro sequences. The set $\mathbb{P}^{(c)}$ is one of the most interesting subsequences; however, up to now very little has been established about the prime numbers in $\PP^{(c)}$ for fixed $c > 1$. Balog \cite{Balog} has shown that $c\in (0, 5/6)$, the counting function
\begin{equation}
\label{eq:pc}
\Pi_{c, 0}(x) =\sum_{\substack{p\leqslant x \\ \lfloor p^c \rfloor \in \PP}}1\sim \frac{x}{c\log ^2 x}.
\end{equation}
Moreover, Balog also showed that for almost all $c > 0$, there are infinitely many primes in $\PP^{(c)}$. For more results on $\mathbb{P}^{(c)}$, we list as follows.

\begin{itemize}
	\item Square-free numbers and Cube-free numbers: \cite{CZ2008}, \cite{ZL2017};
	\item Almost primes: \cite{BGS}, \cite{XLZ2024};
	\item Exceptional sets on $N = \fl{p_1^c} + \fl{p_2^c}$: \cite{Baker2024}, \cite{Lap1999}, \cite{Zhu2017}. 
\end{itemize}

In this article, we analyze some additive problems on the sequence $\PP^{(c)}$ related to linear representations of prime numbers. 

\subsection{Main results}
Let $p,q$ denote primes throughout the paper, $c\in \RR$, $a$ is an integer and $N$ is an integer large enough. Define
\begin{align*}
&\Pi_{c,a}(x)=\#\{p\leqslant x: \lfloor p^c\rfloor = q+a, q\in \PP\},\\
&\Upsilon_c(N)=\#\{m \leqslant N: N = m + q, m = \lfloor p^c\rfloor, q\in \PP\}.
\end{align*}

We prove the following result related to shifted primes.  

\begin{theorem} For all $c \in (0, 13/15)$ and any fixed integer $a\geqslant 0$, as $x\rightarrow \infty$ we have
	\label{thm:main_1}
	$$
	\Pi_{c,a}(x)=\frac{x}{c\log ^2 x}+ O(\frac{x}{\log ^3x}).
	$$
\end{theorem}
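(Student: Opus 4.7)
The plan is to exchange the order of summation between $p$ and the (smaller) prime $q$. For each prime $q$ with $q+a+1 \le x^c$, the condition $\lfloor p^c \rfloor = q+a$ is equivalent to $p \in I_q := [(q+a)^{1/c}, (q+a+1)^{1/c})$, an interval of length $h_q \asymp y_q^{1-c}/c$ around $y_q := (q+a)^{1/c}$. Thus
\[
\Pi_{c,a}(x) = \sum_{\substack{q \in \PP \\ q+a+1 \le x^c}} \bigl(\pi(y_q+h_q) - \pi(y_q)\bigr) + O(x^{1-c}),
\]
the error absorbing the unique boundary interval straddling $x$. Splitting $\pi = \mathrm{Li} + (\pi-\mathrm{Li})$, the $\mathrm{Li}$-part contributes $\sum_q h_q/\log y_q + O(\text{l.o.t.})$, which after noting $h_q/\log y_q \asymp (q+a)^{1/c-1}/\log(q+a)$, applying PNT for $q$ via partial summation, and substituting $s = t^{1/c}$ in the resulting integral, evaluates to $\tfrac{x}{c\log^2 x} + O\!\bigl(\tfrac{x}{\log^3 x}\bigr)$.

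The remaining error $\sum_q\bigl[(\pi(y_q+h_q) - \pi(y_q)) - (\mathrm{Li}(y_q+h_q) - \mathrm{Li}(y_q))\bigr]$ I treat via the truncated explicit formula at height $T$, expressing it as a sum over non-trivial zeros $\rho = \beta+i\gamma$ of $\zeta$:
\[
-\sum_{|\gamma|\le T}\frac{1}{\rho}\sum_q \int_{y_q}^{y_q+h_q}\frac{t^{\rho-1}}{\log t}\,\mathrm{d}t + (\text{truncation and prime-power errors}).
\]
The inner integral is $\ll y_q^{\beta-c}/\log y_q$ when $|\rho| \le y_q^c$ (linearization via $(1+h_q/y_q)^\rho - 1 \approx \rho h_q/y_q$) and $\ll y_q^\beta/\log y_q$ otherwise. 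Choosing $T = x^c(\log x)^C$ for $C$ sufficiently large renders the truncation tail $O(x/\log^3 x)$, while the prime-power correction is trivially $O(x^{(1+c)/2})$.

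The main obstacle is to bound the resulting zero sum $\sum_{|\gamma|\le T} x^\beta \cdot w(\rho) \ll x/\log^4 x$, where $w(\rho)$ is the explicit weight obtained by Abel summation of the inner bounds over primes $q \le x^c$. Splitting by horizontal strips of $\beta$ and applying a zero-density estimate of the form $N(\sigma,T) \ll T^{A(\sigma)(1-\sigma)}(\log T)^B$, a careful optimization of $A(\sigma)$ against $w$ across $\sigma \in [1/2,1]$ produces the needed bound. The precise threshold $c < 13/15$ emerges as the largest range for which the strongest currently available zero-density estimates suffice; this optimization --- rather than any single inequality --- is the heart of the proof and the source of the improvement over Balog's range $c < 5/6$.
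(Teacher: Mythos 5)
Your set-up (swapping the order of $p$ and $q$, passing to primes in the short intervals $I_q$, isolating the main term via PNT, and treating the error via the explicit formula truncated at $T \asymp x^c$) matches the paper's strategy. But the crucial step is wrong, and the gap is not merely a matter of "careful optimization."

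After you sum the zero term over $q$ in absolute value, the quantity you must bound is, up to logarithmic factors, $\sum_{|\gamma|\le T} x^{\beta}$ (the $1/|\rho|$ you wrote outside the integral is spurious: Riemann's explicit formula gives $\pi(y) \approx \operatorname{Li}(y) - \sum_\rho \operatorname{Li}(y^\rho) + \cdots$, and $\operatorname{Li}((y+h)^\rho) - \operatorname{Li}(y^\rho) = \int_y^{y+h} t^{\rho-1}/\log t\,\mathrm{d}t$ carries no extra $1/\rho$; equivalently in the $\psi$-version the linearization $((y+h)^\rho - y^\rho)/\rho \approx h\,y^{\rho-1}$ kills the $\rho$). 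With $N(\sigma,T) \ll T^{\frac{30}{13}(1-\sigma)+o(1)}$ and $T\asymp x^c$, one has
\[
\sum_{|\gamma|\le T} x^{\beta} \ll (\log x)\sup_{1/2\le\sigma\le 1}\, x^{\sigma} T^{\frac{30}{13}(1-\sigma)} = (\log x)\sup_{\sigma}\, x^{1-(1-\sigma)\bigl(1-\frac{30c}{13}\bigr)},
\]
and for $c>\frac{13}{30}$ the supremum is attained at $\sigma=\tfrac12$, giving $x^{\frac12+\frac{15c}{13}}\ge x$: the bound is trivial. Thus the term-by-term approach you describe caps out at $c<\frac{13}{30}$, which is worse even than Balog's $\frac56$.

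What actually produces $c<\frac{13}{15}$ is that one must \emph{not} sum over $q$ in absolute value. The paper keeps the Dirichlet polynomial $F(-\rho/c)=\sum_{r\simeq R} r^{\rho/c}$ intact, writes $S_2 \ll \sum_R \frac1R \sum_\rho |F(-\rho/c)|$, and then applies Cauchy--Schwarz in $\rho$ together with Montgomery's discrete mean value theorem for Dirichlet polynomials (Lemma~\ref{mean_value}) to estimate $\sum_\rho |F(-\rho/c)|^2$. The Cauchy--Schwarz step effectively replaces $N(\sigma,T)$ by $N(\sigma,T)^{1/2}$ in the final supremum, yielding $\sup_\sigma x^{1-(1-\sigma)(1-\frac{15c}{13})}$, which saves a power of $x$ precisely when $c<\frac{13}{15}$. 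This doubling from $1/A$ to $2/A$ (with $A$ the zero-density exponent) is the same mechanism by which Balog obtained $5/6 = 2/(12/5)$ from Huxley's density theorem; the present improvement to $13/15 = 2/(30/13)$ just feeds in the Guth--Maynard bound. No amount of optimizing the weight $w(\rho)$ in your formulation can recover this factor of two, because you have discarded the cancellation in the $q$-sum before the zero sum is ever touched. You need to add the Cauchy--Schwarz plus mean-value-theorem step to make the argument go through.
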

If we take $a=0$, we have the following corollary which improves the result \eqref{eq:pc} by Balog in \cite{Balog}.

\begin{corollary}  For all $c \in (0, 13/15)$, as $x\rightarrow \infty$ we have
	\label{corollary:1}
	$$
	\Pi_{c,0}(x)=\sum_{\substack{p\leqslant x \\ \lfloor p^c \rfloor \in \PP}}1 = \frac{x}{c\log ^2 x}+ O(\frac{x}{\log ^3x}).
	$$
\end{corollary}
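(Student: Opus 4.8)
The plan is to deduce Theorem~\ref{thm:main_1} from an asymptotic for the von Mangoldt--weighted quantity
$$
S(x)=\sum_{n\le x}\Lambda(n)\,\Lambda\big(\lfloor n^{c}\rfloor-a\big),
$$
and to attack $S(x)$ with the exponential--sum circle of ideas from the Piatetski--Shapiro problem. The passage from $S(x)$ to $\Pi_{c,a}(x)$ is routine: prime powers $n=p^{k}$ with $k\ge2$, and values of $n$ for which $\lfloor n^{c}\rfloor-a$ is a proper prime power, together contribute $O(x^{1-c/2+\varepsilon})$; and since $\log(\lfloor n^{c}\rfloor-a)=c\log n+O(n^{-c})$ on the range that matters, partial summation converts a bound $S(x)=x+O(x/\log x)$ into the claimed $\Pi_{c,a}(x)=x/(c\log^{2}x)+O(x/\log^{3}x)$. (A power saving $S(x)=x+O(x^{1-\delta})$ would be more than enough; all the difficulty is in $S(x)$.)

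Write $\gamma=1/c>1$. Grouping $n$ by the value $m=\lfloor n^{c}\rfloor-a$, and isolating the single top block $m=\lfloor x^{c}\rfloor-a$ which contributes $O(x^{1-c}\log^{2}x)$, gives
$$
S(x)=\sum_{m\le x^{c}}\Lambda(m)\Big(\psi\big((m+a+1)^{\gamma}\big)-\psi\big((m+a)^{\gamma}\big)\Big)+O\big(x^{1-c}\log^{2}x\big),
$$
with $\psi$ the Chebyshev function. Replacing $\psi(y)$ by $y$ leaves $\sum_{m\le x^{c}}\Lambda(m)\big((m+a+1)^{\gamma}-(m+a)^{\gamma}\big)$, which equals $x+O\big(x\exp(-c\sqrt{\log x})\big)$ by the prime number theorem --- most cleanly by substituting $\Lambda(m)=-\sum_{d\mid m}\mu(d)\log d$ and using $\sum_{d\le y}\mu(d)(\log d)/d=-1+O(\exp(-c\sqrt{\log y}))$. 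So everything reduces to showing that, \emph{after} averaging against $\Lambda(m)$, the non-trivial zeros of $\zeta$ contribute negligibly to the short-interval counts $\psi((m+a+1)^{\gamma})-\psi((m+a)^{\gamma})$. Pointwise short-interval prime number theorems are useless here: the blocks $[(m+a)^{\gamma},(m+a+1)^{\gamma})$ are as short as $x^{1-c}$, and $1-c$ can be as small as $\tfrac{2}{15}$.

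To carry out this averaging I would apply Vaughan's identity to the outer factor $\Lambda(m)$, reducing the error term
$$
\sum_{m\le x^{c}}\Lambda(m)\Big(\psi\big((m+a+1)^{\gamma}\big)-\psi\big((m+a)^{\gamma}\big)-(m+a+1)^{\gamma}+(m+a)^{\gamma}\Big)
$$
to $O(1)$ Type~I sums $\sum_{d\le U}c_{d}\sum_{\ell}(\cdots)$ and Type~II bilinear sums $\sum_{d\asymp D}\sum_{e\asymp E}\alpha_{d}\beta_{e}(\cdots)$ with $DE\asymp x^{c}$ and $U$ at our disposal. Into every bracket I would feed the truncated explicit formula $\psi(y)=y-\sum_{|\operatorname{Im}\rho|\le T}y^{\rho}/\rho+O(y\log^{2}(yT)/T)$, so that each piece becomes a sum over zeros $\rho$ of an exponential-type sum twisted by a power $y^{\rho}$. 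The Type~I contributions collapse, after integrating over the unit shift, to monomial exponential sums $\sum_{\ell\le L}\ell^{\gamma\operatorname{Re}\rho-1}e\big(\tfrac{\gamma\operatorname{Im}\rho}{2\pi}\log\ell\big)$ over integers, bounded by the van der Corput method and the theory of exponent pairs. The Type~II contributions I would handle by Cauchy--Schwarz in $d$, which turns the inner double sum into $\sum_{e,e'}\beta_{e}\overline{\beta_{e'}}\sum_{d}\big((de+a+1)^{\gamma\rho}-(de'+a+1)^{\gamma\rho}-\cdots\big)$ whose $d$-sum is again a monomial-type exponential sum amenable to exponent pairs; and in the ranges of $(D,E,T)$ where these exponential-sum bounds deteriorate I would instead invoke a zero-density estimate for $\zeta(s)$ (of Huxley/Ingham type) to cap the number of zeros with $\operatorname{Re}\rho$ close to $1$ and recover an admissible bound. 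Optimising the exponent pairs, the density estimate, the level $U$, the truncation height $T$, and the dyadic ranges $D,E$ against one another is exactly what produces the constraint $c<\tfrac{13}{15}$; the gain over Balog's $c<\tfrac{5}{6}$ comes from deploying sharper exponent pairs together with the density estimate rather than either device alone.

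The step I expect to be the main obstacle is the Type~II estimate. It is the usual bilinear bottleneck, and it is aggravated here by the parameter $\gamma\rho$: the exponential-sum bounds must be uniform in the height of the zero, and it is precisely that uniformity requirement that forces the zero-density input to be married to the van der Corput bounds. By comparison, the removal of prime powers, the discarding of the top block, the Mertens-type evaluation of the main term, and the truncation errors from the explicit formula are all routine bookkeeping.
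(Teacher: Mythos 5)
The corollary is nothing more than the specialisation $a=0$ of Theorem~\ref{thm:main_1}, and the paper states it with no separate proof; your proposal is really a sketch of a proof of Theorem~\ref{thm:main_1} itself, which is fine, but the relevant comparison is then to the paper's argument for that theorem.

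Your route is genuinely different from the paper's. You weight both sides with $\Lambda$, group by $m=\lfloor n^c\rfloor-a$, apply Vaughan's identity to the outer $\Lambda(m)$, and aim at Type~I/II sums treated by van der Corput and exponent pairs, with the explicit formula and a zero-density estimate held in reserve as a fallback. The paper does something considerably more direct: it rewrites $\Pi_{c,a}(x)$ as $\sum_{q\leqslant x^c}\bigl(\pi((q+a+1)^{1/c})-\pi((q+a)^{1/c})\bigr)$, passes immediately to $\psi$ via the truncated explicit formula with $T=x^c\log^5 x$, and controls the resulting sum over zeros entirely by Cauchy--Schwarz together with the mean-value theorem for Dirichlet polynomials (Lemma~\ref{mean_value}), the zero-density estimate (Lemma~\ref{lemma:Maynard}), and the zero-free region (Lemma~\ref{zero-free}). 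No Vaughan identity, no van der Corput, no exponent pairs appear anywhere in the proof of Theorem~\ref{thm:main_1}. So the decompositions are structurally different, and the paper's is the lighter of the two.

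There is, however, a concrete gap in your account of where the exponent $\tfrac{13}{15}$ comes from. You write that the gain over Balog's $\tfrac56$ ``comes from deploying sharper exponent pairs together with the density estimate rather than either device alone,'' and you reference only Huxley/Ingham-type density estimates. This is not right, and it matters. The paper's threshold $c<\tfrac{13}{15}$ comes \emph{solely} from substituting the Guth--Maynard bound $N(\sigma,T)\ll T^{30(1-\sigma)/13+o(1)}$ into the Cauchy--Schwarz step: tracking the exponents gives $S_2\ll x(\log x)^A\sup_\sigma x^{-(1-15c/13)(1-\sigma)}$, which gives an admissible saving (combined with the zero-free region) precisely when $1-\tfrac{15c}{13}>0$, i.e.\ $c<\tfrac{13}{15}$. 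Running the identical computation with Huxley's density exponent $\tfrac{12}{5}$ recovers exactly Balog's $c<\tfrac56$; exponent pairs play no role. If you carry out your Vaughan/Type~II plan with only Huxley/Ingham as the density input, you should not expect to exceed $\tfrac56$, and the claim that exponent-pair improvements supply the remaining gain is unsubstantiated. To reach the theorem as stated you must import the Guth--Maynard estimate, and once you do, the paper's much shorter Cauchy--Schwarz plus mean-value-theorem argument makes the Vaughan decomposition unnecessary.
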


We also think about the following additive problem and prove that: 

\begin{theorem} For all $c \in (0, 13/15)$, any large enough integer $N$ can be represented as
	$$
	N=\lfloor p^c\rfloor+q.
	$$
	Moreover, we have
	\label{thm:main_2}
	$$
	\Upsilon_c(N)\gg \frac{N^{1/c}}{\log ^2N}.
	$$
\end{theorem}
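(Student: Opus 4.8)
The plan is to treat $N=\lfloor p^c\rfloor+q$ as a binary additive problem and to exploit that, since $c<1$, the variable $\lfloor p^c\rfloor$ runs over about $N^{1/c}$ primes $p$ while producing only integers up to $N$; this surplus factor $N^{1/c-1}$ is precisely the order of the bound we are after. Put $\gamma=1/c>1$ and let
$$
r_c(m)=\#\{p\in\PP:\ \lfloor p^c\rfloor=m\}=\#\{p\in\PP:\ m^{\gamma}\le p<(m+1)^{\gamma}\}
$$
be the number of primes in the short interval $[m^{\gamma},(m+1)^{\gamma})$, whose length is $\asymp\gamma m^{\gamma-1}$. Keeping only the primes $p$ with $\lfloor p^c\rfloor\in[N/3,N/2]$ and grouping by the value $m=\lfloor p^c\rfloor$ gives
$$
\Pi_c(N)\ \ge\ \sum_{\substack{N/3\le m\le N/2\\ N-m\in\PP}}r_c(m)\ =\ \sum_{\substack{N/2\le q\le 2N/3\\ q\in\PP}}r_c(N-q),
$$
so it is enough to prove that $r_c(m)\gg N^{1/c-1}/\log N$ for every $m\in[N/3,N/2]$ outside an exceptional set $\sE$ with $|\sE|=o(N/\log N)$.

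Bounding $\sE$ is the heart of the matter and is where the hypothesis $c<13/15$ is spent. The intervals $[m^{\gamma},(m+1)^{\gamma})$ essentially tile $[(N/3)^{\gamma},(N/2)^{\gamma}]$ and have length $m^{\gamma-1}\asymp X^{1-c}$ about $X=m^{\gamma}\asymp N^{\gamma}$, so what is needed is a ``primes in almost all short intervals'' statement for intervals of relative length $1-c$. The route is the second moment method: combining the zero-density estimates for $\zeta(s)$ with the exponential sum bounds that already underlie the proof of Theorem~\ref{thm:main_1}, one establishes a mean value estimate of the shape
$$
\sum_{N/3<m\le N/2}\Bigl|\,\theta\bigl((m+1)^{\gamma}\bigr)-\theta\bigl(m^{\gamma}\bigr)-\bigl((m+1)^{\gamma}-m^{\gamma}\bigr)\Bigr|^{2}\ \ll\ N^{2\gamma-1}(\log N)^{-A}
$$
for every fixed $A$, where $\theta$ is Chebyshev's function. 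By Chebyshev's inequality the number of $m\in(N/3,N/2]$ for which the summand exceeds $\delta N^{\gamma-1}$ is then $\ll_{\delta}N(\log N)^{-A}$; for $\delta$ a small enough fixed constant every other $m$ satisfies $\theta((m+1)^{\gamma})-\theta(m^{\gamma})\gg N^{\gamma-1}$, and since each prime in the interval contributes at most $\gamma\log N$ to $\theta$ we get $r_c(m)\gg N^{\gamma-1}/\log N$; we take $\sE$ to be the remaining set of $m$. Balog's range $c<5/6$ corresponds to admitting short intervals of relative length exceeding $1/6$; the improvement to $c<13/15$ is the assertion that the same scheme still closes at relative length $2/15$, and pushing it through --- that is, inserting the sharpest available exponential sum and zero-density information into the mean value above --- is the one real obstacle.

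Granting the bound on $\sE$, the rest is immediate. Since $|\sE|=o(N/\log N)$ while there are $\gg N/\log N$ primes in $[N/2,2N/3]$, at least $\gg N/\log N$ of these primes $q$ have $N-q\notin\sE$, and each then contributes $r_c(N-q)\gg N^{1/c-1}/\log N$. Therefore
$$
\Pi_c(N)\ \gg\ \frac{N}{\log N}\cdot\frac{N^{1/c-1}}{\log N}\ =\ \frac{N^{1/c}}{\log^{2}N},
$$
which is positive for all large $N$, so a representation $N=\lfloor p^c\rfloor+q$ exists. The choice of half-open versus closed intervals and the passage between $\theta$ and the prime-counting function (prime powers) are negligible and are handled exactly as in the proof of Theorem~\ref{thm:main_1}.
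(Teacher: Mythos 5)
Your approach is genuinely different from the paper's, and it correctly identifies where the constant $13/15$ comes from: the Guth--Maynard zero-density exponent $A=30/13$ yields ``primes in almost all intervals $[x,x+x^{\theta}]$'' for $\theta>1-2/A=2/15$, and the intervals $[m^{1/c},(m+1)^{1/c})$ have relative length $1-c$, so the barrier is $1-c>2/15$, i.e.\ $c<13/15$. The paper instead follows Balog's explicit-formula route: it expands $\Pi_c(N)$ into three sums $S_1,S_2,S_3$ using Lemma~\ref{lemma:explicit-formula}, shows $S_1\gg N^{1/c}/\log N$ directly, and controls the sum over zeros $S_2$ via Cauchy--Schwarz, the Dirichlet-polynomial mean value theorem (Lemma~\ref{mean_value}), the Guth--Maynard density estimate (Lemma~\ref{lemma:Maynard}), and the zero-free region (Lemma~\ref{zero-free}), never passing through an explicit ``exceptional set of bad $m$.'' Your exceptional-set framing is conceptually cleaner and would suffice for the stated lower bound (though not for the intermediate asymptotic $\Pi_c(N)\sim S_1/\log N$ which the paper also obtains along the way).

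However, there is a real gap: the second-moment estimate
\[
\sum_{N/3<m\leqslant N/2}\bigl|\theta((m+1)^{\gamma})-\theta(m^{\gamma})-((m+1)^{\gamma}-m^{\gamma})\bigr|^{2}\ll N^{2\gamma-1}(\log N)^{-A}
\]
is the entire substance of the theorem and you assert it rather than prove it. Moreover, the justification you give is inaccurate: you claim it comes from ``the exponential sum bounds that already underlie the proof of Theorem~\ref{thm:main_1},'' but the proof of Theorem~\ref{thm:main_1} uses no van der Corput--type exponential sum bound --- Lemma~\ref{exponential_sum} only enters in the proof of Proposition~\ref{prop:main} for Theorems~\ref{thm:main_3} and \ref{thm:main_4}. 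What actually underlies Theorem~\ref{thm:main_1} (and would underlie your second moment) is the explicit formula plus the Dirichlet-polynomial mean value estimate plus zero density plus the zero-free region, and establishing the second moment from these for the specific, non-uniformly-spaced family $[m^{\gamma},(m+1)^{\gamma})$ (rather than for a continuous range of $x$) is precisely the work the paper's estimate of $S_2$ accomplishes in a different guise. So your proposal has essentially relocated the hard step into an unproved black box; to make it rigorous you would need to carry out that variance computation (including the treatment of zeros near $\sigma=1$ via the Korobov--Vinogradov zero-free region, which you do not mention), at which point you would be doing roughly the same calculation the paper does.
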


Our method is highly related to the behavior of zeros of the Riemann $\zeta$-function. The number $13/15$ is restricted by the upper bound of the zero density estimate $N(\sigma,T)$ due to Guth and Maynard \cite{Maynard} and the zero free region of the Riemann zeta function, where
$$
N(\sigma,T)=\sum_{\substack{\rho\\ \beta>\sigma,\ |\gamma|\leqslant T}}1,
$$
with $\rho =\beta +i\gamma$ being the non-trivial zeros of the Riemann $\zeta$-function. Without another significant improvement on the distribution of the zeros, it is not likely to recover the bound of $c$ by our methods in this paper. 

Hence we consider the problems in an average sense by proving an almost-all result for $c$. For the shifted prime problem, we prove the following result.

\begin{theorem}
	\label{thm:main_3}
 For almost all fixed positive $c \in \RR\setminus\ZZ$ and any fixed integer $a\geqslant 1$, we have (in the sense of Lebesgue measure)
	$$
	\lim\limits_{x\rightarrow\infty}\sup \frac{c \Pi_{c,a}(x)}{x/\log ^2 x}\geqslant 1.
	$$
\end{theorem}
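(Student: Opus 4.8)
The plan is to prove the bound by a variance (second-moment) argument in the parameter $c$, localised to short intervals $[\alpha,\beta]\subset(0,\infty)$, and then to pass to an almost-everywhere statement by Borel--Cantelli. Fix a countable collection of intervals $[\alpha,\beta]$ with rational endpoints whose union is $(0,\infty)$ and with $\beta/\alpha$ tending to $1$ along the collection, together with a sequence $\eta_j\downarrow0$. It is enough to show that for each such interval and each $j$, almost every $c\in[\alpha,\beta]$ satisfies $\Pi_{c,a}(x)\ge(1-\eta_j)\,x/(c\log^2 x)$ for infinitely many $x$; letting $\beta/\alpha\to1$ and $\eta_j\to0$ then yields the theorem on all of $(0,\infty)$.

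First I would compute the \emph{first moment}. Interchanging the sum over $p$ with the integral and substituting $t=p^c$ gives
\[
\int_\alpha^\beta\Pi_{c,a}(x)\dd c=\sum_{p\le x}\frac{1}{\log p}\int_{p^\alpha}^{p^\beta}\frac{\ind{\fl t-a\in\PP}}{t}\dd t=\sum_{p\le x}\frac{1}{\log p}\sum_{\substack{q\in\PP\\ p^\alpha\le q+a\le p^\beta}}\br{\frac{1}{q+a}+O\br{\frac{1}{(q+a)^2}}},
\]
and partial summation with Mertens' theorem evaluates the inner sum as $\log(\beta/\alpha)+o(1)$. Since $\sum_{p\le x}1/\log p\sim x/\log^2 x$, this yields
\[
\int_\alpha^\beta\Pi_{c,a}(x)\dd c=\br{1+o(1)}\,\frac{x}{\log^2 x}\,\log\frac{\beta}{\alpha}=\br{1+o(1)}\int_\alpha^\beta\frac{x}{c\log^2 x}\dd c .
\]

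The crux is the \emph{second moment}. Writing
\[
\int_\alpha^\beta\Pi_{c,a}(x)^2\dd c=\sum_{p\le x}\mu(p,p)+\sum_{\substack{p_1,p_2\le x\\ p_1\ne p_2}}\mu(p_1,p_2),\qquad \mu(p_1,p_2):=\int_\alpha^\beta\ind{\fl{p_1^c}-a\in\PP}\,\ind{\fl{p_2^c}-a\in\PP}\dd c,
\]
the diagonal equals the first moment, hence is $o\br{(x/\log^2 x)^2}$. For an off-diagonal term assume $p_2<p_1$, substitute $t=p_1^c$, and put $\theta=\log p_2/\log p_1\in(0,1)$; using $p_2^c=t^\theta$ one gets
\[
\mu(p_1,p_2)=\frac{1}{\log p_1}\sum_{p_1^\alpha\le n\le p_1^\beta}\frac{\ind{n-a\in\PP}\,\ind{\fl{n^\theta}-a\in\PP}}{n}+\text{(a small error)},
\]
the error accounting for the $O(1)$ integers $n$ inside $[n,n+1)$ at which $t^\theta$ crosses an integer. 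One must show that, after summing over $p_1\ne p_2\le x$, these terms reproduce $\tfrac1{\beta-\alpha}\big(\int_\alpha^\beta\Pi_{c,a}(x)\dd c\big)^2$ up to a factor $1+o(1)$ — equivalently, that $c\mapsto\Pi_{c,a}(x)$ has variance $o\br{(x/\log^2 x)^2}$ on $[\alpha,\beta]$. Because $p^\alpha\le q+a\le p^\beta$ forces $q\asymp p^c$ with $c$ near $\alpha$, the dominant contribution to $\int\Pi_{c,a}(x)\dd c$ comes from primes $p$ with $\log p\sim\log x$, so the \emph{resonant} pairs $\log p_1\sim\log p_2$ (i.e.\ $\theta$ near $1$) are unavoidable and must be treated directly. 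I would detect $\ind{\fl{n^\theta}-a\in\PP}$ via Vaughan's identity together with the truncated Fourier expansion of the sawtooth $\psi(u)=u-\fl u-\tfrac12$, reducing the inner sum to linear and bilinear exponential sums of the shape $\sum_n\Lambda(n-a)e(hn^\theta)$, which are handled by the van der Corput and zero-density inputs already used for Theorem~\ref{thm:main_1}; in the range where Theorem~\ref{thm:main_1} applies one may instead sum over $p_2$ first, recognising $\sum_{p_2<p_1}\ind{\fl{p_2^c}-a\in\PP}$ as a count of type $\Pi_{\nu,a}(p_1)$ with exponent $\nu=\log n/\log p_1\in[\alpha,\beta]$, and a partial-summation computation then shows the outcome matches the ``independent'' prediction $\operatorname{meas}(S_{p_1})\operatorname{meas}(S_{p_2})/(\beta-\alpha)$, where $S_p=\{c\in[\alpha,\beta]:\fl{p^c}-a\in\PP\}$. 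The pairs with $p_1-p_2$ a small power of $p_1$ (so $\fl{p_1^c}$ and $\fl{p_2^c}$ differ by at most $1$ on $[\alpha,\beta]$, forcing $\fl{p_1^c}=\fl{p_2^c}$ whenever $\fl{p_1^c}-a$ is prime) are sparse enough that the trivial bound $\mu(p_1,p_2)\ll1/\log p_1$ makes their contribution $o\br{(x/\log^2 x)^2}$. Making this resonant off-diagonal estimate uniform is the main obstacle, and it is exactly the point at which the exponential-sum technology underlying Theorem~\ref{thm:main_1} is needed.

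Granting the variance bound, Chebyshev's inequality gives $\operatorname{meas}\{c\in[\alpha,\beta]:\Pi_{c,a}(x)<(1-\eta_j)\,x/(c\log^2 x)\}=o(1)$ as $x\to\infty$ (the factor $x/(c\log^2 x)$ varies by at most $\beta/\alpha$ on the interval, which one absorbs into $\eta_j$). Choosing a sparse sequence $x_k\to\infty$ along which this $o(1)$ is summable and invoking the Borel--Cantelli lemma, it follows that for almost every $c\in[\alpha,\beta]$ one has $\Pi_{c,a}(x_k)\ge(1-\eta_j)\,x_k/(c\log^2 x_k)$ for all large $k$, hence for infinitely many $x$. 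Taking the countable union over the interval family and over $j$ completes the proof; since the argument produces the sharp constant, it sharpens Balog's almost-everywhere infinitude statement for $\PP^{(c)}$.
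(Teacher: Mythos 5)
Your route is genuinely different from the paper's and, as written, contains a gap that it does not resolve. The paper does \emph{not} compute a second moment at all. It instead combines the first-moment asymptotic $\int_A^B F_{x,a}(c)\,\mathrm{d}c = (B-A)+o(1)$ with a \emph{uniform} upper bound $\Pi_{c,a}(x)\leqslant K(C,\delta)\,x/\log^2 x$ obtained from Selberg's sieve (the analogue of Proposition~\ref{prop:main}), then runs a measure-theoretic covering argument: if the exceptional set $S$ has positive measure, a finite cover of intervals $I$ gives two incompatible bounds on $\int_I F_{x,a}$, since the first moment forces the integral to be near $\mathcal{L}(I)$ while the pointwise deficit on $S\cap I$ plus the sieve ceiling $K$ off $S$ force it to be strictly smaller. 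That argument only delivers $\limsup\geqslant 1$, which is exactly the theorem statement; no correlation or variance control is needed, and the Selberg upper bound holds uniformly in $c$ on any compact interval away from integers, with no restriction like $c<\frac{13}{15}$.

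Your proposal aims at a stronger conclusion ($\lim=1$ a.e.) by bounding $\int_\alpha^\beta\Pi_{c,a}(x)^2\,\mathrm{d}c$ and invoking Chebyshev plus Borel--Cantelli. Your first-moment computation agrees with the paper's (after renormalisation). However, the second-moment step is not carried out: you yourself identify the resonant off-diagonal pairs with $\log p_1\sim\log p_2$ as ``the main obstacle'' and propose to handle them by the exponential-sum and zero-density technology ``underlying Theorem~\ref{thm:main_1}.'' That technology gives a power saving only for $c\in(0,\frac{13}{15})$; Theorem~\ref{thm:main_3} is supposed to hold for almost all positive $c$, including arbitrarily large $c$, where those inputs fail. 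Without a workable bound for the resonant block the variance estimate is not established, and Chebyshev/Borel--Cantelli cannot be applied. So the chief difficulty your plan leaves open is precisely the one the paper's argument is designed to avoid: replace your variance bound with the Selberg-sieve uniform upper bound and the finite-cover comparison, and the proof closes without ever touching a second moment.
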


We also obtain an almost-all result for the other additive problem. 

\begin{theorem}	
\label{thm:main_4}
For almost all fixed positive $c \in \RR\setminus\ZZ$ and large enough integer $N$, we have (in the sense of Lebesgue measure)
	$$
	\lim\limits_{N\rightarrow\infty}\sup \frac{\Upsilon_c(N)}{cN^{1/c}/\log ^2 N}\geqslant 1.
	$$
\end{theorem}

We remark that the density of $\fl{p^c}$ is lower than the density of prime numbers when $c > 1$. Hence Theorem \ref{thm:main_4} can be treated as an “almost-all” approach to the binary Goldbach problem.  

\section{Preliminaries}
\subsection{Notation}
Throughout the paper, we use the symbols $O,\ll,\gg$ and $\asymp$ along with their standard meanings; any constants or functions implied by these symbols may depend on $c$ and (where obvious) on the parameters $\varepsilon$. We use the notation $m \simeq M$ as an abbreviation for $M < m \leqslant M+ M/\log M$  and $m\sim M$ for $M<m\leqslant 2M$. The function $\mathcal{L}(S)$ denotes the Lebesgue measure of the set $S$.\par
The letters $p,q$ always denote prime numbers. As usual, $\pi(x)$ denotes the number of primes $\leqslant x$, $\Lambda(\cdot)$ is the von Mangoldt function, $\psi(x)=\sum_{n\leqslant x}\Lambda(n)$, $\chi$ is a Dirichlet character and $\rho =\beta +i\gamma$ is a non-trivial zero of the Riemann $\zeta$-function. \par
Let $\e(t)= e^{2\pi it}$ for all $t\in\RR$. $\|t\|$ denotes the distance from the real number $t$ to the nearest integer; that is,
$$
\|t\|=\min_{n\in \ZZ}|t-n|\quad (t\in \RR).
$$
We denote by $\lfloor t\rfloor$ and $\{t\}$ the greatest integer $\leqslant t$ and the fractional part of $t$. Let $\Psi(t)=t-\lfloor t\rfloor- 1/2$.
\subsection{The explicit formula of $\psi(x)$}
The explicit formula of $\psi(x) = \sum_{n \leqslant x} \Lambda(n)$ plays an important role in our proof.  
\begin{lemma}
	\label{lemma:explicit-formula}
	Let $c>1$ be a constant. Suppose that $x\leqslant c$, $T\leqslant 2$, and let $\langle x\rangle$ denote the distance from $x$ to the nearest prime power, other than $x$ itself. Then
	\begin{equation}
	\label{eq:psi0}
	\psi_0(x)=x-\sum_{\substack{\rho \\ |\gamma|\leqslant T}}\frac{x^\rho}{\rho}-\log 2\pi-\frac{1}{2}\log(1-\frac{1}{x^2})+R(x,T),
	\end{equation}
	where
	$$
	R(x,T)\ll (\log x)\min(1,\frac{x}{T\langle x\rangle})+\frac{x}{T}(\log xT)^2
	$$
	and
	$$
	\psi_0(x)=\frac{1}{2}(\psi(x^+)+\psi(x^-)).
	$$
\begin{proof}
See \cite[Theorem~12.5]{Montgomery}.
\end{proof}
\end{lemma}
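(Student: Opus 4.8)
The plan is to derive \eqref{eq:psi0} by the classical two-step argument of a truncated Perron formula followed by a contour shift, exactly as in Davenport's \emph{Multiplicative Number Theory} or \cite[Ch.~12]{Montgomery}; since the statement is \cite[Theorem~12.5]{Montgomery} verbatim one would in practice simply cite it, but here is the argument behind the reference.

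First I would apply the truncated Perron formula to $-\zeta'/\zeta(s)=\sum_{n\ge 1}\Lambda(n)n^{-s}$. Taking $\kappa=1+(\log x)^{-1}$, one obtains
$$
\psi_0(x)=\frac{1}{2\pi i}\int_{\kappa-iT}^{\kappa+iT}\left(-\frac{\zeta'}{\zeta}(s)\right)\frac{x^s}{s}\,ds+E,
$$
where the truncation error satisfies $E\ll\sum_{n\ge 1}\Lambda(n)(x/n)^{\kappa}\min(1,(T|\log(x/n)|)^{-1})$. I would bound this sum by splitting the range of $n$: for $n\notin(x/2,2x)$ one sums geometric-type tails, producing a contribution $\ll (x/T)(\log xT)^2$; inside that range, the single prime power nearest $x$ produces the term $(\log x)\min(1,x/(T\langle x\rangle))$, while the remaining terms are absorbed into the same two error shapes. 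This accounts precisely for $R(x,T)$.

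Next I would move the line of integration from $\Re s=\kappa$ to $\Re s=-U$ with $U=2m+1$, letting $m\to\infty$. The poles crossed are: the simple pole of $-\zeta'/\zeta$ at $s=1$, with residue $x$; the simple pole of $x^s/s$ at $s=0$, with residue $-\zeta'/\zeta(0)=-\log 2\pi$; the non-trivial zeros $\rho$ with $|\gamma|\le T$, each contributing $-x^{\rho}/\rho$; and the trivial zeros $s=-2k$ with $1\le k\le m$, contributing $\sum_{k\le m}x^{-2k}/(2k)$, whose limit is $-\frac12\log(1-x^{-2})$. For the two horizontal segments $\Im s=\pm T$ I would first choose $T$ so that $|T-\gamma|\gg 1/\log T$ for every ordinate $\gamma$ of a zero (possible by a spacing/pigeonhole argument, using the bound $\ll\log(|t|+2)$ for the number of zeros with ordinate in $[t,t+1]$), whence $\zeta'/\zeta(\sigma\pm iT)\ll(\log T)^2$ for $-1\le\sigma\le\kappa$ via the partial-fraction expansion $\zeta'/\zeta(s)=\sum_{|\gamma-t|\le 1}(s-\rho)^{-1}+O(\log(|t|+2))$ with $t=\Im s$; together with $\zeta'/\zeta(\sigma\pm iT)\ll\log(|\sigma|+|T|)$ for $\sigma\le-1$ (from the functional equation), the horizontal integrals are $\ll x^{\kappa}(\log T)^2/(T\log x)$, within $R(x,T)$. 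The left vertical segment is $\ll x^{-U}\log U\to 0$. Finally, a standard adjustment removes the assumption that $T$ avoids zero ordinates: shifting $T$ by $O(1)$ alters $\sum_{|\gamma|\le T}x^{\rho}/\rho$ and the error only by amounts already of size $R(x,T)$.

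I expect the delicate points to be (i) the bookkeeping in the Perron error near $n\approx x$, where one must isolate the single closest prime power to recover the $\langle x\rangle^{-1}$ factor rather than a cruder bound, and (ii) the control of $\zeta'/\zeta$ on the horizontal lines, which rests on the partial-fraction expansion above and the zero-counting estimate; everything else is routine contour estimation.
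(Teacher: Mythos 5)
The paper's proof of this lemma is a single citation to \cite[Theorem~12.5]{Montgomery}, and your sketch is a correct reconstruction of the classical argument behind that reference: truncated Perron formula with the error split to isolate the prime power nearest $x$, contour shift past the poles at $s=1$, $s=0$, the nontrivial zeros and the trivial zeros, and control of the horizontal segments via the partial-fraction expansion of $\zeta'/\zeta$ after choosing $T$ well-spaced from zero ordinates. This is precisely the approach of the cited source, so your proposal agrees with the paper's proof in substance.
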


\subsection{The bound for $N(\sigma,T)$ and primes in short intervals}
\begin{lemma}
\label{lemma:Maynard}
Let $N(\sigma,T)$ denote the number of zeros $\rho $ of $\zeta(s)$ with $\Re(\rho)\geqslant \sigma$ and $\Im(\rho)\leqslant T$. It follows that
$$
N(\sigma,T)\ll T^{30(1-\sigma)/13+o(1)}.
$$
\begin{proof}
See \cite[Theorem~1.2]{Maynard}.
\end{proof}
\end{lemma}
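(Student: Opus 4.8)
The bound in Lemma~\ref{lemma:Maynard} is the main theorem of Guth and Maynard \cite{Maynard}, and it is not something one reproves from the material above; accordingly the proof in the text is by citation. We nonetheless outline its architecture, since the constant $\tfrac{13}{15}$ in Theorems~\ref{thm:main_1}--\ref{thm:main_2} (and the exponent $\tfrac{13}{30}$ in Theorem~\ref{thm:main_4}) is precisely the arithmetic consequence of feeding the density exponent $\tfrac{30}{13}$ into the standard Piatetski--Shapiro machinery. The first step is the classical reduction of $N(\sigma,T)$ to a large values problem for Dirichlet polynomials, in the style of Ingham, Montgomery and Huxley. Multiplying $\zeta(s)$ by a mollifier $M_X(s)=\sum_{n\le X}\mu(n)n^{-s}$ with $X$ a small fixed power of $T$ and using a smooth truncation, one shows that for every zero $\rho=\beta+i\gamma$ with $\beta\ge\sigma$ and $|\gamma|\le T$ there is a dyadic $N$ with $X\le N\ll T^{O(1)}$ such that the Dirichlet polynomial $\sum_{n\sim N}a_n n^{-i\gamma}$, with divisor-bounded coefficients $|a_n|\le N^{o(1)}$, has absolute value $\gg N^{\sigma}T^{-o(1)}$. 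Normalizing the coefficients to $D(t)=\sum_{n\sim N}b_n n^{-it}$ with $|b_n|\le 1$ and putting $V=N^{\sigma}T^{-o(1)}$ (so $V\ge N^{1/2-o(1)}$ since $\sigma\ge\tfrac12$), and using that non-trivial zeros of $\zeta$ are $\gg 1/\log T$ apart in ordinate so that one may retain a $1$-separated subset at the cost of $(\log T)^{O(1)}$, one obtains after pigeonholing over the $O(\log T)$ dyadic scales the bound
$$
N(\sigma,T)\ll T^{o(1)}\,\max_{N}R(N,T,V),
$$
where $R(N,T,V)$ is the largest number of $1$-separated points $t_1,\dots,t_R\in[-T,T]$ at which $|D(t_r)|\ge V$.

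The second and decisive step is the estimate for $R(N,T,V)$. The classical baseline is the Montgomery--Halász method: bounding $\int_{-T}^{T}|D(t)|^{2k}\,dt$ by the number of solutions of $n_1\cdots n_k=m_1\cdots m_k$ gives mean value estimates, which combined with the large sieve (or duality) yield large value bounds of Huxley type and, optimized, the density exponent $\tfrac{12}{5}$ in the neighborhood of $\sigma=\tfrac34$ that has long been the bottleneck. The new input of \cite{Maynard} is a genuinely stronger large value estimate precisely in this critical regime --- $N$ a fixed power of $T$, $\sigma$ near $\tfrac34$: expanding a high power of $D$, organizing the resulting frequencies $\log(n_1\cdots n_k/m_1\cdots m_k)$, and replacing the purely additive-combinatorial bookkeeping of the classical argument by a geometric incidence / decoupling-type analysis. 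Feeding the resulting bound for $R(N,T,V)$ back into the reduction above and optimizing the free parameters ($N$ and the moment length $k$) yields $N(\sigma,T)\ll T^{30(1-\sigma)/13+o(1)}$ throughout the range $\tfrac34\le\sigma\le 1$.

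For the remaining values of $\sigma$ one patches in the known bounds: for $\tfrac12\le\sigma\le\tfrac34$ the classical density estimates of Ingham and Huxley supply what is needed for the applications, and for $\sigma$ close to $1$ the Vinogradov--Korobov zero-free region gives $N(\sigma,T)=0$, so the bound holds trivially there; in any case, for $\sigma\le\tfrac{17}{30}$ the exponent $30(1-\sigma)/13$ is $\ge 1$ and the estimate follows already from $N(\sigma,T)\ll T\log T$. The main obstacle --- and the technical heart of \cite{Maynard} --- is the large value estimate of the second step; as remarked in the introduction, without further progress on large values of Dirichlet polynomials neither the exponent $\tfrac{30}{13}$ nor the resulting range $c\in(0,\tfrac{13}{15})$ can be pushed further by the method of this paper.
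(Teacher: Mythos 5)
Your proposal is correct and matches the paper: the lemma is exactly Theorem~1.2 of Guth--Maynard \cite{Maynard}, and the paper's proof, like yours, is by citation. The additional outline of the large-value-estimate architecture is accurate in its broad strokes but is supplementary exposition rather than a different route.
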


\subsection{A mean value theorem}
\begin{lemma}
\label{lem:mean_value}
Let 
$$
S(s,\chi)=\sum_{n=1}^{N}a_n\chi(n)n^{-s},
$$
and let $\mathcal{A}_\chi$ be a finite set of complex numbers $s=\sigma+it$. Let $T_0,T,\sigma_0,\delta$ be real numbers such that for each $\chi$,
$$
T_0+\frac{\delta}{2}\leqslant t\leqslant T_0+T-\frac{\delta}{2},\quad \sigma\geqslant \sigma_0 \quad \text{and} \quad |t-t'|\geqslant \delta 
$$
for all $s,s'\in\mathcal{A}_\chi$ and $s\neq s'$. Then 
$$
\sum_{\chi}\sum_{s\in \mathcal{A}_\chi}|S(s,\chi)|^2\ll (qT+N)(\delta^{-1}+\log N)\sum_{n=1}^{N}|a_n|^2n^{-2\sigma_0}\bigg( 1+\log \frac{\log 2N}{\log 2n} \bigg).
$$ 
\end{lemma}

\begin{proof}
See \cite[Theorem~7.6]{Montgomery2}.
\end{proof}

\subsection{The zero-free region}

We also need the classical result of the zero-free region of the Riemann $\zeta$-function; see \cite[Corollary~11.4]{Montgomery2}.
\begin{lemma}
\label{zero-free}
There is an absolute constant $c>0$ such that if $\zeta(\rho)=0, \rho=\beta+i\gamma, \tau=|\gamma|+2$, then
$$
\beta \leqslant 1-c(\log \tau)^{-2/3}(\log\log\tau)^{-1/3}.
$$
\end{lemma}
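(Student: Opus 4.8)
The plan is to establish this by the classical Vinogradov--Korobov argument, whose two ingredients are (i) a near-logarithmic growth bound for $\zeta(s)$ just to the left of the line $\Re s=1$, furnished by Vinogradov's mean value theorem, and (ii) the de la Vall\'ee Poussin ``$3$-$4$-$1$'' positivity trick. Throughout write $L=L(\tau)=(\log\tau)^{2/3}(\log\log\tau)^{1/3}$. It suffices to treat a zero $\rho=\beta+i\gamma$ with $|\gamma|$ exceeding a suitable absolute constant, because for bounded $|\gamma|$ the elementary de la Vall\'ee Poussin region $\beta\le 1-c_0/\log\tau$ is already stronger; taking the weaker of the two bounds then covers all $\tau$, including the harmless overlap where $|\gamma|$ is neither bounded nor large. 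We may also assume $\beta>1-c_1/L$, since otherwise there is nothing to prove.

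The first step is the analytic input. Splitting the Dirichlet sums $\sum_{N<n\le 2N}n^{-it}$ into Weyl-type exponential sums $\sum\e(t\phi(n))$ and applying Vinogradov's mean value theorem (in any of its proofs --- Vinogradov's original argument, Wooley's efficient congruencing, or the Bourgain--Demeter--Guth decoupling method) with an optimally chosen degree, then summing over dyadic ranges and using partial summation, yields the Vinogradov--Korobov bound
$$
|\zeta(\sigma+it)|\ll(\log\tau)^{2/3}\qquad\bigl(1-c_1 L^{-1}\le\sigma\le 2,\ |t|\ge t_0\bigr),
$$
so that $\log|\zeta(s)|\le\tfrac23\log\log\tau+O(1)$ on a thin strip to the left of $\Re s=1$.

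The second step converts this into pointwise control of $-\Re(\zeta'/\zeta)$ near $\rho$. Using the standard lemma that expresses $\zeta'/\zeta(s)$ near a point $s_0$ as a sum over the zeros in a disc about $s_0$, with error proportional to the logarithm of $\max|\zeta|$ on a concentric disc (Borel--Carath\'eodory together with Hadamard's three-circles theorem), applied on a disc about $1+i\gamma$ of radius comparable to $L^{-1}$ on which the growth bound above is available, and discarding every zero other than $\rho$ via $\Re\frac{1}{s-\rho'}\ge 0$ for $\Re s>1$, one obtains for $1<\sigma\le 1+c_2 L^{-1}$
$$
-\Re\frac{\zeta'}{\zeta}(\sigma+i\gamma)\le -\frac{1}{\sigma-\beta}+C_1 L,\qquad
-\Re\frac{\zeta'}{\zeta}(\sigma+2i\gamma)\le C_1 L,
$$
together with the elementary $-\frac{\zeta'}{\zeta}(\sigma)\le\frac{1}{\sigma-1}+C_2$ for $1<\sigma\le 2$. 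Inserting these into the nonnegativity of $\sum_n\Lambda(n)n^{-\sigma}\bigl(3+4\cos(\gamma\log n)+\cos(2\gamma\log n)\bigr)$, i.e.\ into
$$
-3\frac{\zeta'}{\zeta}(\sigma)-4\Re\frac{\zeta'}{\zeta}(\sigma+i\gamma)-\Re\frac{\zeta'}{\zeta}(\sigma+2i\gamma)\ge 0,
$$
gives $\frac{4}{\sigma-\beta}\le\frac{3}{\sigma-1}+C_3 L$; choosing $\sigma=1+\lambda L^{-1}$ with $\lambda$ a sufficiently small absolute constant makes the right-hand side $\le(3\lambda^{-1}+C_3)L$, whence $\sigma-\beta\gg L^{-1}$, and rearranging produces $\beta\le 1-c(\log\tau)^{-2/3}(\log\log\tau)^{-1/3}$ with $c>0$ absolute.

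The main obstacle is the first step: the growth bound for $\zeta$ near $\Re s=1$ is precisely where the full force of Vinogradov's mean value theorem is needed, and the entire quantitative shape of the conclusion --- the exponents $2/3$ and $1/3$ --- emerges from balancing the saving in the exponential-sum estimate against the length $\asymp\tau$ of the zeta sum, so one must track carefully how every implied constant depends on $\tau$. By comparison the second step is the textbook de la Vall\'ee Poussin machinery and needs only routine verification; the one point requiring attention there is matching the radius of the disc in the three-circles step to the width $\asymp L^{-1}$ of the strip on which the growth bound holds (together with the choice $\sigma=1+\lambda L^{-1}$ of comparable size), which is exactly what forces the savings in the zero-free region to be of order $L^{-1}$ rather than $(\log\tau)^{-1}$.
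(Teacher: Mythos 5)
The paper does not actually prove this lemma; it is the classical Vinogradov--Korobov zero-free region, quoted directly from \cite[Corollary~11.4]{Montgomery2}. Your outline follows the standard route to that theorem (a Vinogradov-type growth bound for $\zeta$ just left of $\Re s=1$, fed into the de la Vall\'ee Poussin $3$--$4$--$1$ positivity argument via Borel--Carath\'eodory/Landau), so the architecture is the correct one and the first step is, as you say, where the real analytic work lies.

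There is, however, a concrete quantitative gap in your second step, located exactly at the point you dismiss as routine: the calibration of the disc radius. Write $L=(\log\tau)^{2/3}(\log\log\tau)^{1/3}$ as you do. If you apply the Landau lemma on a disc about $s_0=\sigma_0+i\gamma$ with $\sigma_0-1\asymp L^{-1}$ and radius $r\asymp L^{-1}$, the error term is $O(M/r)$, where $e^{M}$ bounds $|\zeta(s)/\zeta(s_0)|$ on the disc. Your growth bound gives $|\zeta(s)|\ll(\log\tau)^{2/3}$ there, while the best available lower bound is $|\zeta(s_0)|\gg\sigma_0-1\gg L^{-1}$, so $M\asymp\log\log\tau$ and the error is $O(L\log\log\tau)$, not $C_1L$ as you assert. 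Feeding $\frac{4}{\sigma-\beta}\le\frac{3}{\sigma-1}+O(L\log\log\tau)$ into the $3$--$4$--$1$ inequality and optimizing $\sigma$ then yields only $1-\beta\gg(\log\tau)^{-2/3}(\log\log\tau)^{-4/3}$, a full factor $\log\log\tau$ short of the lemma. To recover the stated exponents you must \emph{not} tie the radius to the width $L^{-1}$ of the narrow strip: you need the Richert-type form of the Vinogradov--Korobov estimate, $\zeta(\sigma+it)\ll\tau^{B(1-\sigma)^{3/2}}(\log\tau)^{2/3}$ uniformly in (say) $\tfrac12\le\sigma\le1$, and you should take the disc radius $r\asymp\bigl((\log\log\tau)/\log\tau\bigr)^{2/3}=L^{-1}\log\log\tau$, for which $\tau^{Br^{3/2}}=(\log\tau)^{O(1)}$, hence still $M\asymp\log\log\tau$ but now $M/r\asymp L$; only afterwards does one choose $\sigma-1\asymp L^{-1}$ inside the smaller concentric disc. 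This optimization of $r$ against the $\tau^{B(1-\sigma)^{3/2}}$ factor is precisely where the exponents $2/3$ and $1/3$ come from; fixing $r\asymp L^{-1}$ as written loses the $(\log\log\tau)^{-1/3}$ and the step would fail to deliver the lemma as stated.
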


\subsection{Technical lemmas }
We also need the following well known approximation of Vaaler~\cite{Vaal} and a useful lemma for exponential sums.

\begin{lemma}
	\label{lem:Vaaler}
	For any $H\ge 1$ there are numbers $a_h,b_h$ such that
	$$
	\bigg|\Psi(t)-\sum_{0<|h|\le H}a_h\,\e(th)\bigg|
	\le\sum_{|h|\le H}b_h\,\e(th),\qquad
	a_h\ll\frac{1}{|h|}\,,\qquad b_h\ll\frac{1}{H}\,.
	$$
\end{lemma}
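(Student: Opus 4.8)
The plan is to reconstruct Vaaler's approximation from the Beurling--Selberg extremal-function machinery. The point to keep in mind is that an $L^1$- or $L^2$-good trigonometric approximation to $\Psi$ is cheap --- Fej\'er means already achieve it --- and the entire difficulty lies in producing a \emph{one-sided} bound $\big|\Psi(t)-\sum_{0<|h|\le H}a_h\e(th)\big|\le\sum_{|h|\le H}b_h\e(th)$ in which the right-hand side is itself a non-negative trigonometric polynomial of degree at most $H$. Once that is arranged, the estimate for $b_h$ is automatic: a non-negative trigonometric polynomial $\sigma$ satisfies $|\widehat{\sigma}(h)|\le\widehat{\sigma}(0)$, so it suffices to make the constant term $\asymp 1/H$.

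First I would record Beurling's extremal majorant $B(z)$ for $\operatorname{sgn}(x)$: the entire function of exponential type $2\pi$ with $B(x)\ge\operatorname{sgn}(x)$ for all real $x$, $\int_{\RR}\bigl(B(x)-\operatorname{sgn}(x)\bigr)\dd x=1$, and $\widehat{B-\operatorname{sgn}}$ supported in $[-1,1]$ (the minorant being $-B(-x)$). Following Selberg, one assembles from $B$ an entire majorant $M^{+}$ and minorant $M^{-}$, both of exponential type $2\pi$, for the $1$-periodic sawtooth --- equivalently for $\lfloor t\rfloor$, up to the smooth term $t-\tfrac12$ --- so that $M^{-}-(t-\tfrac12)\le-\Psi(t)\le M^{+}-(t-\tfrac12)$, with $M^{+}-M^{-}$ entire, non-negative on $\RR$, of exponential type $2\pi$, and $\int_0^1(M^{+}-M^{-})=1$ before dilation.

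Next I would dilate by $H+1$, replacing $B(z)$ by $B((H+1)z)$ everywhere, so that $M^{\pm}$ become entire of exponential type $2\pi(H+1)$ with Fourier transforms supported in $[-(H+1),H+1]$ and vanishing at the endpoints; periodizing by Poisson summation turns $M^{\pm}$ into trigonometric polynomials $\Psi^{+}_{H},\Psi^{-}_{H}$ of degree at most $H$ with $\Psi^{-}_{H}(t)\le\Psi(t)\le\Psi^{+}_{H}(t)$ and $\int_0^1\bigl(\Psi^{+}_{H}-\Psi^{-}_{H}\bigr)\dd t=\tfrac{1}{H+1}$. Setting $\psi_{H}:=\tfrac12(\Psi^{+}_{H}+\Psi^{-}_{H})$ and $\sigma_{H}:=\tfrac12(\Psi^{+}_{H}-\Psi^{-}_{H})$ gives the decomposition $|\Psi-\psi_{H}|\le\sigma_{H}$ with $a_h:=\widehat{\psi_{H}}(h)$ and $b_h:=\widehat{\sigma_{H}}(h)\ge 0$; after symmetrizing, $\psi_H$ has no constant term. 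The bound $b_h\ll 1/H$ is immediate from $\widehat{\sigma_{H}}(0)=\tfrac{1}{2(H+1)}$ and $|\widehat{\sigma_{H}}(h)|\le\widehat{\sigma_{H}}(0)$; and the Poisson-summation formula for the periodization shows that $a_h$ equals $\widehat{\Psi}(h)=-\tfrac{1}{2\pi i h}$ times a factor --- a cotangent expression in $|h|/(H+1)$ --- that stays bounded by $1$ in modulus, so $|a_h|\le\tfrac{1}{2\pi|h|}\ll 1/|h|$, as required.

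The main obstacle is the construction and verification of Beurling's function: establishing $B(x)\ge\operatorname{sgn}(x)$ with the \emph{exact} mass $\int(B-\operatorname{sgn})=1$ and pinning down both the support and the endpoint vanishing of its Fourier transform, together with the exponential-type bookkeeping needed to land at degree exactly $H$ rather than $H+1$ after periodization. A route that sidesteps redoing this is to take Vaaler's explicit formulas outright: $\sigma_{H}(t)=\tfrac{1}{2H+2}F_{H}(t)$ with $F_{H}$ the Fej\'er kernel $\tfrac{1}{H+1}\bigl(\tfrac{\sin\pi(H+1)t}{\sin\pi t}\bigr)^{2}$ (manifestly non-negative, of degree $H$, with mass $1$), and $\psi_{H}$ given by Vaaler's closed form for $a_h$ (a cotangent-weighted smoothing of $-1/(2\pi i h)$, visibly $\ll 1/|h|$), and then to verify the pointwise inequality $|\Psi-\psi_{H}|\le\sigma_{H}$ by a direct summation identity. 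This is in effect the content of \cite{Vaal}, which is why it is quoted here rather than reproved.
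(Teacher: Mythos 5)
Your proposal correctly reconstructs the Beurling--Selberg extremal-function argument underlying Vaaler's theorem, which is exactly what the paper relies on by citing \cite{Vaal} without reproof. The one-sided decomposition $|\Psi-\psi_H|\le\sigma_H$ with $\sigma_H$ a scaled Fej\'er kernel and $a_h$ given by cotangent-weighted smoothings of $-1/(2\pi i h)$ is Vaaler's actual construction, so your route and the paper's are the same.
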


\begin{lemma}
	\label{exponential_sum}
Let $f(x)$ be real and have continuous derivatives up to the $k$-th order, where $k\leqslant 4$. Let $\lambda_k\leqslant f^{(k)}(x)\leqslant h\lambda_k$. Let $b-a\geqslant 1$, $K=2^{k-1}$. Then
$$
\sum_{a<n\leqslant b}\e(f(n))\ll (b-a)\lambda_k^{1/(2K-2)}+(b-a)^{1-2/K}\lambda_k^{-1/(2K-2)},
$$
where the constants implied are independent of $k$.
\end{lemma}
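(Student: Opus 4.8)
My plan is to prove Lemma~\ref{exponential_sum} by induction on $k$ using van der Corput's method: the base case $k=2$ is the classical second-derivative test, and the cases $k=3,4$ follow from it by iterating the Weyl--van der Corput differencing inequality (the ``$A$-process''). Since only $k\in\{2,3,4\}$ occur, the absolute constant acquired at each of the (at most three) steps may be replaced by their maximum, which is why the final implied constant can be taken independent of $k$; likewise the ratio $h$ enters each intermediate estimate only to a bounded power and is absorbed into $\ll$. I carry out the van der Corput argument in the range $b-a\ge 1$, which is where the estimate carries content.

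\emph{Base case $k=2$.} Here $K=2$ and the claim reads $\sum_{a<n\le b}\e(f(n))\ll (b-a)\lambda_2^{1/2}+\lambda_2^{-1/2}$. Since $\lambda_2\le f''\le h\lambda_2$, the derivative $f'$ is monotone on $[a,b]$ and changes by $\asymp\lambda_2$ per unit step, so it lies within $\lambda_2^{1/2}$ of any fixed integer for $\ll\lambda_2^{-1/2}$ consecutive integers and passes near $\ll 1+(b-a)\lambda_2$ integers in total. Hence at most $\ll\lambda_2^{-1/2}+(b-a)\lambda_2^{1/2}$ integers $n\in(a,b]$ have $\|f'(n)\|<\lambda_2^{1/2}$, and the complementary integers lie in $\ll 1+(b-a)\lambda_2$ maximal runs on each of which $f'$ is monotone with $\|f'\|$ bounded away from $\ZZ$ by $\lambda_2^{1/2}$; the Kusmin--Landau first-derivative estimate bounds the exponential sum over such a run by $\ll\lambda_2^{-1/2}$, and summing over the runs again gives $\ll\lambda_2^{-1/2}+(b-a)\lambda_2^{1/2}$, proving the case $k=2$. (Equivalently one may run the $B$-process: truncated Poisson summation replaces the sum by $\ll 1+(b-a)\lambda_2$ integrals of the shape $\int\e(f(x)-\nu x)\dd x$, each $O(\lambda_2^{-1/2})$ by the second-derivative van der Corput lemma for integrals.)

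\emph{Inductive step, $k\in\{3,4\}$.} Assume the bound for $k-1$, with $K'=2^{k-2}$; note $2K'-2=2^{k-1}-2\ge 2$ for $k\ge 3$. For an integer $Q$ with $1\le Q\le b-a$, the Weyl--van der Corput inequality gives
\[
\Bigl|\sum_{a<n\le b}\e(f(n))\Bigr|^2\ll\frac{(b-a)^2}{Q}+\frac{b-a}{Q}\sum_{1\le q\le Q}\Bigl|\sum_{n\in I_q}\e\bigl(g_q(n)\bigr)\Bigr|,
\]
where $I_q=(a,b-q]$ and $g_q(x)=f(x+q)-f(x)$. By the mean value theorem $g_q^{(k-1)}(x)=q\,f^{(k)}(\xi)$ for some $\xi\in(x,x+q)$, so $q\lambda_k\le g_q^{(k-1)}(x)\le qh\lambda_k$ on $I_q$ and $|I_q|\le b-a$. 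Applying the case $k-1$ to each inner sum gives $\bigl|\sum_{n\in I_q}\e(g_q(n))\bigr|\ll (b-a)(q\lambda_k)^{1/(2^{k-1}-2)}+(b-a)^{1-2^{3-k}}(q\lambda_k)^{-1/(2^{k-1}-2)}$; summing over $q\le Q$ (using $\sum_{q\le Q}q^{\pm 1/(2^{k-1}-2)}\ll Q^{1\pm 1/(2^{k-1}-2)}$, valid since $2^{k-1}-2\ge 2>1$) yields
\[
\Bigl|\sum_{a<n\le b}\e(f(n))\Bigr|^2\ll\frac{(b-a)^2}{Q}+(b-a)^2Q^{1/(2^{k-1}-2)}\lambda_k^{1/(2^{k-1}-2)}+(b-a)^{2-2^{3-k}}Q^{-1/(2^{k-1}-2)}\lambda_k^{-1/(2^{k-1}-2)}.
\]
Balancing the first two terms with $Q\asymp\lambda_k^{-1/(2^{k-1}-1)}$ (truncated to $[1,b-a]$) and simplifying via $2^k-2=2(2^{k-1}-1)$ and $2^{2-k}=2/K$, one recovers on taking square roots exactly the bound $(b-a)\lambda_k^{1/(2K-2)}+(b-a)^{1-2/K}\lambda_k^{-1/(2K-2)}$ asserted in Lemma~\ref{exponential_sum}. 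When the optimal $Q$ leaves $[1,b-a]$ — that is, $\lambda_k\gg 1$ or $\lambda_k\ll (b-a)^{-(K-1)}$ — the trivial bound $\sum_{a<n\le b}\e(f(n))\ll b-a+1$ already gives the claim: in the first case it is dominated by the first stated term, and in the second, since $(K-1)/(2K-2)=\frac{1}{2}$ and $2/K\le\frac{1}{2}$ for $k\ge 3$, by the second.

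\emph{Main obstacle.} The only genuine input is the base case $k=2$; the cases $k=3,4$ are formal consequences obtained by the mechanical iteration above. The single point requiring care in the inductive step is the optimization bookkeeping — verifying that truncating $Q$ to $[1,b-a]$ never produces a term worse than the two claimed, where the inequalities $b-a\ge1$, $(K-1)/(2K-2)=\frac{1}{2}$ and $2/K\le\frac{1}{2}$ ($k\ge3$) are exactly what is needed. In practice one may of course simply invoke this as the standard $k$-th derivative test; see, e.g., Titchmarsh, \emph{The Theory of the Riemann Zeta-function}, Theorem~5.13, or Graham and Kolesnik, \emph{Van der Corput's Method of Exponential Sums}.
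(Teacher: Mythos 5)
The paper does not prove this lemma at all; it simply cites Titchmarsh, \emph{The Theory of the Riemann Zeta-function}, Theorem~5.13. Your proposal reconstructs the underlying van der Corput $k$\textsuperscript{th}-derivative test in the standard way (Kusmin--Landau/second-derivative test as base case, iterated Weyl--van der Corput $A$-process for the inductive step), so the argument is correct and is exactly the one that Titchmarsh's theorem packages. Your bookkeeping checks out: the optimisation $Q\asymp\lambda_k^{-1/(K-1)}$ balances the first two terms, the identity $2K-2=2(2^{k-1}-1)$ converts the exponents, and the truncation cases are handled correctly using $(K-1)/(2K-2)=\tfrac12$ and $2/K\le\tfrac12$.

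Two remarks on the statement itself, which you implicitly work around. First, the hypothesis ``$b-a\le 1$'' is plainly a typo for $b-a\ge 1$ (Titchmarsh writes $b\ge a+1$); you sensibly note this and argue in the meaningful regime. Second, Titchmarsh's Theorem~5.13 is in fact stated for $k\ge 4$, not $k\le 4$ as the paper has it; the claim that the implied constant is ``independent of $k$'' is only nontrivial when $k$ is allowed to be large. Your observation that ``the absolute constant acquired at each of the (at most three) steps may be replaced by their maximum'' is sufficient for the stated range $k\in\{2,3,4\}$, but would not suffice for unbounded $k$: there the constants accumulate across $k-2$ iterations, and one must track the dependence on $h$ explicitly (Titchmarsh's conclusion carries a factor $h^{2/K}$, with the residual constants genuinely absolute). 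Since the paper later takes $k=\lfloor c+2\rfloor$ with $c\le C$ for arbitrary $C$, the uniform-in-$k$ version is what is actually needed; worth flagging that your induction as written establishes the lemma only as literally stated, not the version the paper subsequently invokes.
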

\begin{proof}
See \cite[Theorem~5.13]{Titc}.
\end{proof}

\section{Proof of theorem~\ref{thm:main_1}}

\subsection{An initial construction}
We start with the set-up by Balog~\cite{Balog}. Assuming $0<c<1$ it is trivial that
$$
\lfloor p^c \rfloor =q+a   \Leftrightarrow (q+a)^{1/c} \leqslant p <(q+a+1)^{1/c}.
$$
Hence the problem changes into a question of the sum of primes in short intervals which is 
\begin{align}
\label{eq:pic}
\Pi_{c,a}(x) &=\sum_{q+a\leqslant x^c} \sum_{\substack{ (q+a)^{1/c} \leqslant p <(q+a+1)^{1/c}}}1 \nonumber \\
&= \sum_{q\leqslant x^c} \big( \pi((q+a+1)^{1/c})-\pi((q+a)^{1/c}) \big)+O(1).
\end{align}
Next, we change $\pi(x)$ into $\psi(x)$ by a partial summation and use Lemma~\ref{lemma:explicit-formula}. Taking $T=x^c\log^5x$, it gives that
$$
R(x,T)\ll \frac{x\log ^2x}{T}
$$
Then we simplify~\eqref{eq:psi0} to
\begin{equation}
\label{eq:psi}
\psi(x)=x-\sum_{\substack{\rho \\ |\gamma|\leqslant T}}\frac{x^\rho}{\rho}+O(\frac{x\log ^2x}{T}).
\end{equation}
We replace $q+a$ by $r$. To estimate \eqref{eq:pic}, it is sufficient to compute
\begin{align*}
\sum_{q\leqslant x^c} \bigg( \psi((r+1)^{1/c})-\psi(r^{1/c}) \bigg) = S_1 - S_2 + O(S_3),
\end{align*}
where
\begin{align*}
	&S_1=\sum_{\substack{r\leqslant x^c \\ r-a \in\PP}} \bigg( (r+1)^{1/c}-r^{1/c} \bigg),\\
	&S_2=\sum_{\substack{r\leqslant x^c \\ r-a \in\PP}}  \sum_{\substack{\rho \\ |\gamma| \leqslant T}}\frac{(r+1)^{\rho/c} - r^{\rho/c}}{\rho} ,\\
	&S_3= \sum_{\substack{r\leqslant x^c \\ r-a \in\PP}} \frac{r\log^2 r}{T}.
\end{align*}
according to \eqref{eq:psi}.

\subsection{The estimates of $S_1$ and $S_3$}
Note that $S_1$ is the main term while $S_2$ and $S_3$ are both error terms. Since $r-a\in\PP$, $r$ is a shift of the prime $q$. By a partial summation and , we obtain that
\begin{align*}
\nonumber S_1 &= \int_{2}^{x^c} \bigg( (u+1)^{1/c}-u^{1/c} \bigg)\,d\pi(u+a) \\
&= \frac{x}{c^2\log x} +O\big(\frac{x}{\log^2x}\big) - \frac{1}{c}\big(\frac{1}{c}-1\big)\int_{2}^{x^c}u^{-2+1/c}\big(1+O(u^{-1})\big) \pi(u+a) \, du.
\end{align*}
According to prime number theorem, we have
$$
\int_{2}^{x^c}u^{-2+1/c} \pi(u+a) \, du = \int_{2}^{x^c}u^{-2+1/c} \frac{u}{\log u}\big(1+O(\log^{-1} u)\big) \, du.
$$
Noting that
$$
\frac{1}{c}\big(\frac{1}{c}-1\big)\int_{2}^{x^c}u^{-2+1/c} \frac{u}{\log u} \, du = \big(\frac{1}{c}-1\big) \int_{2}^{x^c}\frac{du^{1/c}}{\log u} = \frac{1}{c}\big(\frac{1}{c}-1\big) \int^{x}_{2^{1/c}} \frac{dt}{\log t},
$$
it follows that
\begin{align}
	\label{eq:bdS1}
S_1 &= \frac{x}{c^2\log x} - \frac{1}{c}\big(\frac{1}{c}-1\big) \frac{x}{\log x} + O\big(\frac{x}{\log^2x}\big) = \frac{x}{c\log x} + O\big(\frac{x}{\log^2x}\big).
\end{align}
Similarly by a partial summation and prime number theorem, we get that 
\begin{equation}
\label{eq:bdS3}
S_3 \ll \frac{x}{\log ^2x}.
\end{equation}
 
\subsection{The estimate of $S_2$ and the final conclusion}
Now we consider $S_2$. We split the interval in the following way,
$$
S_2=\sum_{\{R\}}\sum_{r\simeq R} \sum_{\substack{\rho \\ |\gamma| \leqslant T}}\frac{(r+1)^{\rho/c}-r^{\rho/c}}{\rho},
$$
where $\{R\}$ is a certain set of integers satisfying
$$
R \leqslant x^c \quad \text{and} \quad \sum_{\{R\}}1 \ll \log^2x.
$$
Define 
$$
F(s)=\sum_{r\simeq R}r^{-s} \quad \text{and} \quad C(s)=\frac{(1+\frac{1}{R})^{s}-1}{s}.
$$
Hence we arrive at
\begin{align*}
S_2 &= \sum_{\{R\}}\sum_{\substack{\rho \\ |\gamma|\leqslant T}} \sum_{r\simeq R} r^{\rho/c} \frac{\big( (1+\frac{1}{r})^{\rho/c} -1 \big)}{\rho} \\
& \ll \sum_{\{R\}} \bigg| \sum_{\substack{\rho \\ |\gamma|\leqslant T}} \sum_{r\simeq R} r^{\rho/c} \frac{\big( (1+\frac{1}{R})^{\rho/c} -1 \big)}{\rho} \bigg| \\
& \ll \sum_{\{R\}} \bigg| \sum_{\substack{\rho \\ |\gamma|\leqslant T}} F(-\frac{\rho}{c})C(\frac{\rho}{c}) \bigg|.
\end{align*}
By the trivial bound $|C(s)|\ll \frac{1}{R}$, it follows that
\begin{align*}
S_2&\ll \sum_{\{R\}}\frac{1}{R}\Big|\sum_{\substack{\rho \\ |\gamma|\leqslant T}}F(-\frac{\rho}{c})\Big|.
\end{align*}
Using the standard zero spacing process to the sum of $F(-\rho/c)$, namely, we divide the range of the real part of $\rho$ into intervals of length $1/\log x$ to get that
$$
\sum_{\substack{\rho \\ |\gamma|\leqslant T}}\Big|F(-\frac{\rho}{c}) \Big| \ll \log x \sup_{\sigma} \sum_{\substack{\rho \\ |\gamma| \leqslant T , \sigma < \beta \leqslant \sigma + \frac{1}{\log x}}}\Big|F(-\frac{\rho}{c}) \Big|,
$$
where $\beta$ is the real part of $\rho$. Since for $r\simeq R$ and $R \leqslant x^c$, we have for any $\sigma < \beta \leqslant \sigma + 1/\log x$, 
$$
\big| F(-\frac{\rho}{c}) \big| \ll F(-\frac{\sigma}{c}).
$$
Hence by the Cauchy inequality, we obtain that
$$
S_2 \ll \log x\sum_{\{R\}}\frac{1}{R}\bigg| \sup_{\sigma}N(\sigma,T)^{\frac{1}{2}} \bigg(\sum_{\substack{\rho \\ |\gamma|\leqslant T, \sigma < \beta \leqslant \sigma + \frac{1}{\log x}}}\Big(F(-\frac{\sigma}{c}) \Big)^2\bigg)^{\frac{1}{2}}\bigg|.
$$
It follows by Lemma~\ref{lem:mean_value} with $a_n=1$ and $\chi=1$ that
\begin{align*}
S_2&\ll \log^2 x \sum_{\{R\}}\frac{1}{R} \sup_{\sigma} N(\sigma,T)^{\frac{1}{2}} \bigg( (T+R)\sum_{r\simeq R} r^{\frac{2\sigma}{c}}\bigg)^{\frac{1}{2}}\\
&\ll \log^2 x \sum_{\{R\}}  \sup_{\sigma}N(\sigma,T)^{\frac{1}{2}}T^{\frac{1}{2}}R^{-\frac{1}{2}+\frac{\sigma}{c}}.
\end{align*}
By Lemma~\ref{lemma:Maynard} and Lemma~\ref{zero-free}, recalling that $T=x^c\log ^5x$ and $R\leqslant x^c$, we derive that
\begin{align*}
S_2&\ll (\log x)^A \sup_{\sigma\leqslant 1-c(\log t)^{-\frac{2}{3}}(\log\log t)^{-\frac{1}{3}}} N(\sigma,T)^{\frac{1}{2}} x^\sigma \\
&\ll x(\log x)^A \sup_{\sigma\leqslant 1-c(\log t)^{-\frac{2}{3}}(\log\log t)^{-\frac{1}{3}}}x^{-(1-15c/13)(1-\sigma)}\ll x^{1-\eps},
\end{align*}
where $A$ is a constant and the condition $c< 13/15$ ensures that the supremum above contributes a power saving of $x$. Together with \eqref{eq:bdS1} and \eqref{eq:bdS3} we have
\begin{align*}
\sum_{r\leqslant x^c} \bigg( \psi((r+1)^{1/c})-\psi(r^{1/c}) \bigg)=\frac{x}{c\log x} + O(\frac{x}{\log ^2x}) + O(x^{1-\eps}).
\end{align*} 
Eventually by a partial summation, it follows that
$$
\Pi_{c,a}(x) = \frac{x}{c\log ^2x} + O(\frac{x}{\log ^3x})
$$
holds for $0<c< 13/15$.

\section{Proof of Theorem~\ref{thm:main_2}}
Next we turn to the proof of Theorem~\ref{thm:main_2}. Assuming $0<c<1$ it is trivial that
$$
N=m+q \Leftrightarrow \lfloor p^c \rfloor =N-q   \Leftrightarrow (N-q)^{1/c} \leqslant p <(N-q+1)^{1/c}.
$$
Hence the problem changes into a question of the sum of primes in short intervals which is 
\begin{align*}
\Upsilon_c(N) &=\sum_{1\leqslant N-q\leqslant N-2 } \sum_{\substack{ (N-q)^{1/c} \leqslant p <(N-q+1)^{1/c}}}1 \nonumber \\
	&= \sum_{1\leqslant N-q\leqslant N-2 } \big( \pi((N-q+1)^{1/c})-\pi((N-q)^{1/c}) \big)+O(1).
\end{align*}
Following the arguments that are similar to the proof of Theorem~\ref{thm:main_1}, we change $\pi(x)$ into $\psi(x)$ by a partial summation and use Lemma~\ref{lemma:explicit-formula}. Hence the key is to treat the following three sums      
\begin{align*}
	&S_1=\sum_{\substack{1\leqslant N-q\leqslant N-2 }} \bigg( (N-q+1)^{1/c}-(N-q)^{1/c} \bigg),\\
	&S_2=\sum_{\substack{1\leqslant N-q\leqslant N-2}} \sum_{\substack{\rho \\ |\gamma| \leqslant T}}\frac{(N-q+1)^{\rho/c}-(N-q)^{\rho/c}}{\rho},\\
	&S_3= \sum_{\substack{1\leqslant N-q\leqslant N-2}} \frac{(N-q)^{1/c}\log ^2(N-q)}{T}.
\end{align*}  
We note that here the difficulty is to calculate the number of integers $r$ such that $N-r$ is prime if we replace $N-q$ by $r$. Therefore at this point we only aim to show that $S_2+S_3\ll S_1$.
\subsection{The order of $S_1$} 
We give the order of $S_1$ and prove that $S_2+S_3\ll S_1$. We begin with
\begin{equation*}
S_1\gg \frac{1}{\log N}\sum_{2\leqslant n\leqslant N-1}\Lambda(n)(N-n)^{1/c-1}.
\end{equation*}
Thus by a partial summation, we arrive at
\begin{equation}
	\label{S_1}
S_1\gg \frac{1}{\log N}\int_{2}^{N-1}(N-u)^{1/c-1}\,d\big(\sum_{2\leqslant n\leqslant u}\Lambda(n) \big) \gg \frac{N^{1/c}}{\log N}.
\end{equation}

\subsection{The order of $S_2$ and $S_3$}
By taking $T=N\log ^5N$, we immediately find that
\begin{equation}
	\label{S_3}
S_3\ll \frac{1}{\log ^3N}\sum_{\substack{2\leqslant n \leqslant N-1}} N^{1/c-1}\ll \frac{N^{1/c}}{\log ^3N}.
\end{equation}
As for $S_2$, we follow the arguments in section~$3.3$. Also replacing $N-q$ by $r$ and splitting the interval in the following way, it is obtained that
$$
S_2=\sum_{\{R\}}\sum_{\substack{r\simeq R \\ N-r\in\PP}} \sum_{\substack{\rho \\ |\gamma| \leqslant T}}\frac{(r+1)^{\rho/c}-r^{\rho/c}}{\rho},
$$
where $\{R\}$ is a certain set of integers satisfying
$$
R \leqslant x^c \quad \text{and} \quad \sum_{\{R\}}1 \ll \log^2x.
$$
Also define
$$
F(s)=\sum_{r\simeq R}r^{-s} \quad \text{and} \quad C(s)=\frac{(1+\frac{1}{R})^{s}-1}{s}.
$$
Then following the arguments of section~$3.3$, using the standard zero spacing process, the Cauchy-Schwartz inequality, Lemma~\ref{lem:mean_value} by taking $\chi = 1$ and $a_n = 1$, Lemma~\ref{lemma:Maynard} and Lemma~\ref{zero-free} we get that
\begin{equation}
	\label{S_2}
S_2\ll x^{1-\eps}
\end{equation}
holds for $c\in (0, 13/15)$.
Eventually combining \eqref{S_1},\eqref{S_3} and \eqref{S_2}, we achieve that
$$
S_1\gg S_2+S_3.
$$
Thus we have
\begin{equation*}
\Upsilon_c(N) \gg \frac{1}{\log N}\sum_{\substack{1\leqslant N-q\leqslant N-2 }} \bigg( (N-q+1)^{1/c}-(N-q)^{1/c} \bigg)
\end{equation*}
holds for $c\in (0, 13/15)$.

\section{Proof of Theorem~\ref{thm:main_3} and \ref{thm:main_4}}
We first give a proof of Theorem \ref{thm:main_4}, since Theorem~\ref{thm:main_3} has the same argument to Theorem~\ref{thm:main_4} but simpler.
\subsection{The main Proposition}
The following proposition is a key part of the proof.  
\begin{proposition}
\label{prop:main}
For any $C>0$ and $\delta>0$, we have a constant $K=K(C,\delta)$ such that
$$
\Upsilon_c(N)\leqslant K\frac{N^{1/c}}{\log ^2 N},
$$ 
uniformly in $N\geqslant 3$, $0<c\leqslant C$ and $\| c \|\geqslant \delta>0$.
\begin{proof}
For a given $D\geqslant 3$, we have Selberg's sieving weights $\lambda_d$ (see Chapter 3 in \cite{HR}) satisfying
\begin{equation}
	\label{5.1}
\lambda_1=1;\quad |\lambda_d|\leqslant 1;\quad \lambda_d=0\ \text{for}\ d>D;\quad \sum_{d}\sum_{[d_1,d_2]=d}\frac{\lambda_{d_1}\lambda_{d_2}}{[d_1,d_2]}<\frac{1}{\log D}.
\end{equation}
We also have
\begin{equation}
		\label{5.2}
	\bigg( \sum_{d|n}\lambda_d \bigg)^2
	\left\{
	\begin{matrix}
		&\geqslant 0,\ &\text{trivially}, \\
		&=1,\ &\text{if}\ n>D\ \text{is prime}. \\
	\end{matrix}
	\right.
\end{equation}
According to the Selberg's sieve, we know that
\begin{equation}
		\label{5.3}
	\rho_d=\sum_{d=[d_1,d_2]}\lambda_{d_1}\lambda_{d_2}
	\left\{
	\begin{matrix}
		&= 0,\ \text{if}\ d>D^2, \\
		&\ll  3^{\omega(d)}, \\
	\end{matrix}
	\right.
\end{equation}
where $\omega(n)$ denotes the number of different prime factors of $n$. By \eqref{5.2} and \eqref{5.3} we have
\begin{align}
	\label{5.4}
\nonumber \Upsilon_c(N)&=\sum_{p\leqslant N^{1/c}}\sum_{p^c-1<N-q\leqslant p^c}1+O(1)\\
\nonumber&\leqslant \sum_{m\leqslant N^{1/c}}\sum_{m^c-1<N-n\leqslant m^c}\bigg( \sum_{d|m}\lambda_d \bigg)^2\bigg( \sum_{t|n}\lambda_t \bigg)^2\\
\nonumber&\ \ \ +\sum_{\substack{p\leqslant N^{1/c} \\ p<D}}\sum_{p^c-1<N-q\leqslant p^c}1+\sum_{p\leqslant N^{1/c}}\sum_{\substack{p^c-1<N-q\leqslant p^c \\ q<D}}1+O(1)\\
&\leqslant \sum_{m\leqslant N^{1/c}}\sum_{m^c-1<N-n\leqslant m^c}\bigg( \sum_{d|m}\lambda_d \bigg)^2\bigg( \sum_{t|n}\lambda_t \bigg)^2 +D +\frac{D}{c}N^{-1+1/c}.
\end{align}
Here we get the last term since $D$ is a small power of $N$. Next we estimate the first term of \eqref{5.4} by changing the order of sum, which gives that 
$$
\sum_{m\leqslant N^{1/c}}\sum_{m^c-1<N-n\leqslant m^c}\bigg( \sum_{d|m}\lambda_d \bigg)^2\bigg( \sum_{t|n}\lambda_t \bigg)^2 = \sum_{d\leqslant D^2}\sum_{t\leqslant D^2}\rho_d\rho_t \cdot S,
$$
where
\begin{equation*}
	\label{5.5}
S=\sum_{m\leqslant N^{1/c}/d}\sum_{m^cd^c-1<N-nt\leqslant m^cd^c}1
\end{equation*}
is the inner sum. Hence we have
\begin{align*}
S&=\sum_{m\leqslant N^{1/c}/d}\bigg(  \fl{-\frac{N-m^cd^c}{t}} - \fl{-\frac{N-m^cd^c+1}{t}}\bigg)\\
&=\frac{1}{t}\sum_{m\leqslant N^{1/c}/d}1+\sum_{m\leqslant N^{1/c}/d}\bigg( \Psi\Big(-\frac{N-m^cd^c+1}{t}\Big) - \Psi\Big(-\frac{N-m^cd^c}{t}\Big) \bigg).
\end{align*}
Applying Lemma~\ref{lem:Vaaler}, we obtain the second sum of $S$ is $S_1 + O(S_2)$ where
$$
S_1 = \sum_{1\leqslant |h|\leqslant H}a_h\sum_{m\leqslant N^{1/c}/d}\bigg(\e\big(\frac{N-m^cd^c+1}{t}h\big) - \e\big(\frac{N-m^cd^c}{t}h\big)\bigg)
$$
and
$$
S_2 = \sum_{0\leqslant |h|\leqslant H}b_h\sum_{m\leqslant N^{1/c}/d}\bigg(\e\big(\frac{N-m^cd^c+1}{t}h\big) + \e\big(\frac{N-m^cd^c}{t}h\big)\bigg).
$$
We first estimate $S_1$. Since $a_h \ll \frac{1}{h}$ and 
$$
\e(\frac{h}{t})-1 \ll \frac{h}{t},
$$
we have
$$
S_1 \ll  \frac{1}{t}\sum_{1\leqslant |h|\leqslant H} \bigg| \sum_{m\leqslant N^{1/c}/d}\e\big(\frac{m^cd^ch}{t}\big) \bigg|.
$$
To make $S_1$ a error term, we only need to show
$$
\cT \defeq \max_{m' \leqslant N^{1/c}/d}\sum_{1\leqslant |h|\leqslant H}\bigg|\sum_{m\sim m'}\e\big(\frac{m^cd^ch}{t}\big)\bigg| \ll N^{1/c-\varepsilon}.
$$
Thus, by Lemma~\ref{exponential_sum} and summing over $h$ trivially, we obtain that 
\begin{align*}
&\cT\ll N^{\frac{1}{c}+\frac{1-k/c}{2^k-2}+\eps}d^{\frac{k-2^k+2}{2^k-2}}H^{1+\frac{1}{2^k-2}}t^{-\frac{1}{2^k-2}}\\
&\ \ \ +N^{\frac{1}{c}-\frac{4}{c2^k}-\frac{1-k/c}{2^k-2}+\eps}d^{\frac{4}{2^k}-\frac{2^k-2+k}{2^k-2}}H^{1-\frac{1}{2^k-2}}t^{\frac{1}{2^k-2}}.
\end{align*}
Since $D$ and $H$ are both small powers of $x$, thus the integer $k$ satisfies the inequality
\begin{equation}
	\label{5.6}
	\left\{
	\begin{matrix}
		&\frac{1}{c}+\frac{1-k/c}{2^k-2}+\eps<\frac{1}{c}, \\
		&\frac{1}{c}-\frac{4}{c2^k}-\frac{1-k/c}{2^k-2}+\eps<\frac{1}{c}, \\
	\end{matrix}
	\right.
\end{equation}
which concludes that $k=\fl{c+2}$ is the solution of \eqref{5.6}. By taking 
$$
H=D^3\quad \text{and}\quad D=N^{\frac{1}{8(C+1)2^{C}}},
$$
we ensure that $\cT=O(N^{1/c-\eps})$. As for $S_2$, we only need to treat the situation when $h=0$, since the case when $|h|>0$ is the same as $S_1$. The contribution of $S_2$
when $h=0$ is
$$
\ll \frac{N^{1/c}}{dH}\ll N^{1/c-\varepsilon}.
$$
By \eqref{5.4}, we finally achieve that 
\begin{align*}
\Upsilon_c(N)&\leqslant\sum_{d\leqslant D^2}\sum_{t\leqslant D^2}\rho_d\rho_t \frac{N^{1/c}}{dt}+O(\frac{N^{1/c}}{\log ^3N})\\
&\ll \frac{N^{1/c}}{\log ^2D}+O(\frac{N^{1/c}}{\log ^3N}),
\end{align*}
where the implied constant only depends on the constant $\|c\|=\delta$ and $C$. The proof is complete.
\end{proof}
\end{proposition}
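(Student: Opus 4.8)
The plan is to obtain the upper bound by applying Selberg's sieve simultaneously to the two prime variables. Write $\Pi_c(N)=\#\{p\le N^{1/c}:\ N-q=\lfloor p^c\rfloor\ \text{for some prime }q\}+O(1)$, so that we are counting pairs $(m,n)=(p,q)$ of primes with $m^cd^c$-type localisation (here, at level $d=t=1$, simply $m^c-1<N-n\le m^c$). Let $(\lambda_d)$ be Selberg's weights of level $D$ (a small fixed power of $N$, to be chosen), so that $\lambda_1=1$, $|\lambda_d|\le1$, $\lambda_d=0$ for $d>D$, and $\sum_d\sum_{[d_1,d_2]=d}\lambda_{d_1}\lambda_{d_2}/[d_1,d_2]<1/\log D$; see \cite{HR}. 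After removing the negligibly few pairs with $p<D$ or $q<D$, which contribute $O(D+DN^{-1+1/c})$, I would bound the indicator of ``$m$ and $n$ both prime'' by the nonnegative majorant $\big(\sum_{d\mid m}\lambda_d\big)^2\big(\sum_{t\mid n}\lambda_t\big)^2$, which is legitimate since this quantity equals $1$ whenever $m$ and $n$ are primes larger than $D$.

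Expanding the two squares, setting $\rho_d=\sum_{[d_1,d_2]=d}\lambda_{d_1}\lambda_{d_2}$ (which is supported on $d\le D^2$ and satisfies $\rho_d\ll 3^{\omega(d)}$), and interchanging the order of summation reduces the problem to the double sum $\sum_{d,t\le D^2}\rho_d\rho_t\,S(d,t)$, where $S(d,t)=\#\{m\le N^{1/c}/d:\ \exists n,\ m^cd^c-1<N-nt\le m^cd^c\}$. Counting the admissible $n$ through floor functions gives $S(d,t)=\tfrac1t\#\{m\le N^{1/c}/d\}+\sum_m\big(\Psi(\tfrac{N-m^cd^c+1}{t})-\Psi(\tfrac{N-m^cd^c}{t})\big)$, so the main term is $\tfrac{N^{1/c}}{dt}+O(1)$, and the entire burden falls on the sawtooth error.

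To treat the error I would apply Vaaler's approximation (Lemma~\ref{lem:Vaaler}) with a truncation parameter $H$, which replaces each $\Psi$ by an exponential polynomial $\sum_{0<|h|\le H}a_h\e(\cdot\,h)$ up to a majorant of the same shape, thereby reducing everything to the exponential sums $\sum_{m\sim N^{1/c}/d}\e\big(\tfrac{m^cd^ch}{t}\big)$. These I would estimate by van der Corput's $k$-th derivative test (Lemma~\ref{exponential_sum}) with $k=\fl{c+2}$; the point is that $f^{(k)}(m)=c(c-1)\cdots(c-k+1)\,m^{c-k}d^ch/t$ has its leading constant bounded away from zero precisely because $\|c\|\ge\delta$, so that the resulting saving is a genuine power of $N$, i.e.\ $O(N^{1/c-\eps})$, once $D$ and $H$ are taken as suitable small powers of $N$ (for instance $D=N^{1/(8(C+1)2^{C})}$ and $H=D^3$). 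The only remaining piece is the $h=0$ term of the majorant, which contributes $\ll N^{1/c}/(dH)$. Combining, $\Pi_c(N)\le\big(\sum_{d,t\le D^2}\tfrac{\rho_d\rho_t}{dt}\big)N^{1/c}+O(N^{1/c}/\log^3N)$, and the double sum factors as $\big(\sum_d\sum_{[d_1,d_2]=d}\lambda_{d_1}\lambda_{d_2}/[d_1,d_2]\big)^2<1/\log^2D$; since $D$ is a fixed power of $N$ with exponent depending on $C$, $\log D$ is comparable to $\log N$ and we obtain $\Pi_c(N)\ll N^{1/c}/\log^2N$ with implied constant depending only on $C$ and $\delta$.

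I expect the main obstacle to be achieving this bound uniformly in $c$: one must pick a single integer $k$ for which both terms delivered by Lemma~\ref{exponential_sum} beat the target $N^{1/c}/\log^3N$ simultaneously for all $c\in(0,C]$, and one must keep $f^{(k)}(m)\asymp m^{c-k}$ uniformly over the ranges of $d$, $t$, $h$ and $m$, which is exactly what the hypotheses $0<c\le C$ and $\|c\|\ge\delta$ are there to guarantee and what pins down the admissible size of $D$.
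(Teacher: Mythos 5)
Your proposal follows essentially the same route as the paper: a double Selberg sieve majorant in both prime variables, passage to $\sum_{d,t\le D^2}\rho_d\rho_t\,S(d,t)$, Vaaler's approximation of the sawtooth, van der Corput's $k$-th derivative test with $k=\fl{c+2}$, and the same choices $H=D^3$, $D=N^{1/(8(C+1)2^{C})}$. Your explicit remark that $\|c\|\ge\delta$ keeps the leading coefficient of $f^{(k)}$ bounded away from zero is the (correct) reason the constant depends only on $C$ and $\delta$, a point the paper leaves implicit.
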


\subsection{Proof of Theorem~4}
We consider $\Upsilon_c(N)$ as a function of $c$ and define
$$
F_N(c)=\Upsilon_c(N)\bigg/ \big(\frac{cN^{1/c}}{\log ^2N} \big).
$$
We shall show that $F_N(c)$ is bigger than a constant on average. By Proposition~\ref{prop:main}, we know 
$$
F_N(c)\leqslant K(C,\delta) 
$$
uniformly holds for $0<c\leqslant C$ and $\|c\|>\delta>0$. Also the Lebesgue measure of the natural numbers is negligible, which ensures that $F_N(c)$ is integrable on an interval.

Let $B>A\geqslant \frac{1}{2}$. We start by the definition of $F_N(c)$
$$
\int_{A}^{B}F_N(c)dc = \int_{A}^{B}\frac{\log ^2N}{cN^{1/c}}\bigg(\sum_{p\leqslant N^{1/c}}\sum_{p^c-1<N-q\leqslant p^c}1 +O(1) \bigg)\,dc.
$$
Changing the order of integral and sum, we obtain that
\begin{align}
	\label{FNc1}
\int_{A}^{B}F_N(c)dc = \log^2 N \sum_{p \leqslant N^{1/A}} \sum_{\substack{p^A < N-q \leqslant p^B \\ 2 \leqslant q < N}} \int_{A'}^{B'} \frac{1}{c}N^{-1/c} dc  + O(N^{-1/c+\varepsilon}),
\end{align}
where 
$$
A' = \max(A,\frac{\log(N-q)}{\log p}), \quad B' = \min(B,\frac{\log(N-q+1)}{\log p}).
$$
Since $p^A < N-q \leqslant p^B$, it follows that
\begin{align}
	\label{A'B'}
A' = \frac{\log(N-q)}{\log p} \quad \text{and} \quad B' = \frac{\log(N-q+1)}{\log p}.
\end{align}
Noting that
\begin{align}
	\label{FNc2}
\nonumber \int_{\frac{\log(N-q)}{\log p}}^{\frac{\log(N-q+1)}{\log p}} \frac{1}{c}N^{-1/c} dc &\geqslant \frac{\log p}{\log(N-q+1)} N^{-\frac{\log p}{\log(N-q)}} \cdot ( \frac{\log (N-q+1)}{\log p} - \frac{\log(N-q)}{\log p}) \\
 &\geqslant \frac{N^{-\frac{\log p}{\log(N-q)}}}{\log(N-q+1)}\big(\frac{1}{N-q} - \frac{2}{(N-q)^2}\big),
\end{align}
where we use the inequality
$$
\log (1+x) \geqslant x - \frac{x^2}{2} \quad \text{for all}\ x \geqslant 0.
$$
Inserting \eqref{A'B'} and \eqref{FNc2} into \eqref{FNc1}, we have
\begin{align}
	\label{FNc3}
\nonumber \int_{A}^{B}F_N(c)dc &\geqslant \log^2 N \sum_{p \leqslant N^{1/A}} \sum_{\substack{p^A < N-q \leqslant p^B \\ 2 \leqslant q < N}} \frac{N^{-\frac{\log p}{\log(N-q)}}}{\log(N-q+1)}\big(\frac{1}{N-q} - \frac{2}{(N-q)^2}\big)  \\
\nonumber &+ O(N^{-1/c+\varepsilon}) \\
&\geqslant (1+o(1))\log^2 N \sum_{N^{\varepsilon} \leqslant p \leqslant N^{1/A}} \sum_{\substack{p^A < N-q \leqslant p^B \\ 2 \leqslant q < N}} \frac{N^{-\frac{\log p}{\log(N-q)}}}{(N-q)\log(N-q+1)}.
\end{align}
Recalling that $q$ is a prime, we derive that the inner sum over $q$ is
\begin{align}
	\label{FNc4}
\nonumber&\sum_{\substack{p^A < N-q \leqslant p^B \\ 2 \leqslant q < N}} \frac{N^{-\frac{\log p}{\log(N-q)}}}{(N-q)\log(N-q+1)} = \int_{p^A}^{\min(p^B,N)} \frac{N^{-\frac{\log p}{\log t}}}{t\log (t+1)} d\sum_{\substack{p^A < x N-q \leqslant t \\ q \in \PP}}1 \\
\nonumber&\quad = \int_{p^A}^{\min(p^B,N)} \frac{N^{-\frac{\log p}{\log t}}}{t\log (t+1)} d\Big( \frac{N-t}{\log (N-t)} - \pi(N-t)\Big) \\
&\qquad - \int_{p^A}^{\min(p^B,N)} \frac{N^{-\frac{\log p}{\log t}}}{t\log (t+1)} d\frac{N-t}{\log (N-t)} \defeq I_1 - I_2.
\end{align}
Here we restrict $p>N^{1/B}$ to make $p^B>N$. Thus by prime number theorem, we have $I_1 = o(I_2)$. Hence by \eqref{FNc3} and \eqref{FNc4} it follows that, 
\begin{align}
	\label{FNc5}
\nonumber \int_{A}^{B}F_N(c)dc &\geqslant (1+o(1))\log^2 N \sum_{N^{1/B} < p \leqslant N^{1/A}} \int_{p^A}^{N} \frac{N^{-\frac{\log p}{\log t}}}{t\log (t+1) \log (N-t)}dt \\
&\geqslant (1+o(1)) \sum_{N^{1/B} < p \leqslant N^{1/A}} \int_{p^A}^{N} \frac{N^{-\frac{\log p}{\log t}}}{t}dt.
\end{align}
Next replace $\log t$ by $u$. It follows that
\begin{align*}
\int_{A}^{B}F_N(c)dc &\geqslant (1+o(1))\sum_{ N^{1/B} < p \leqslant N^{1/A}} \int_{A\log p}^{\log N} e^{-\frac{\log p\log N}{u}}du \\
&= (1+o(1)) \sum_{ N^{1/B} < p \leqslant N^{1/A}} \log p \int_{A}^{\frac{\log N}{\log p}} e^{-\frac{\log N}{u}}du.
\end{align*}
Recalling that $p$ is also a prime, by prime number theorem and a partial summation, we obtain that
\begin{align}
	\label{FNc}
\nonumber \int_{A}^{B}F_N(c)dc &\geqslant (1+o(1))\int_{N^{1/B}}^{N^{1/A}} \int_{A}^{\log N / \log t} e^{-\frac{\log N}{u}}du dt \\
&= (1+o(1))\int_{A}^{B} e^{-\frac{\log N}{u}} \int_{N^{1/B}}^{N^{1/u}} dt du = B-A + o(1).
\end{align}

Now we are interested in the following set
$$
S^*=\{ c>0: \lim\limits_{N\rightarrow\infty }\sup F_N(c)<1 \}=\bigcup_{k=0}^{\infty}\bigcup_{T=3}^{\infty}\bigcup_{m=3}^{\infty}\bigcup_{N=3}^{\infty}\bigcap_{n>N}^{\infty}S(J_k,m,n),
$$
where
\begin{equation}
\label{eq:jk}
J_k=[k+\frac{1}{T},k+1-\frac{1}{T}],
\end{equation}
and
$$
S(J_k,m,n)=S=\{ c\in J_k: F_n(c)\leqslant 1-\frac{1}{m} \}.
$$
We shall show that for fixed $m$ and large enough integer $n$ , $\cL(S) = 0$, which implies that $\cL(S^*) = 0$. Suppose that $\cL(S)=l$. For any $\eps>0$ there is a finite cover $I_{\omega}$, $\omega\in\Omega$ and $\Omega$ is an index set such that
$$
S\subseteq \bigcup_{\omega\in\Omega}I_{\omega}\subseteq J_k,\quad \sum_{\omega\in\Omega} \cL(I)<l+\eps.
$$
Now we fix an $I\subseteq J_k$. By \eqref{FNc}, we have for large enough integer $n$ 
\begin{equation}
	\label{lower_bound}
\int_{I}F_n(c)\,dc\geqslant\cL(I)(1-\frac{1}{2m}).
\end{equation}
While from the definition of $S(J_k,m,n)$ and Proposition~\ref{prop:main}, it follows that
\begin{align}
	\label{upper_bound}
\nonumber \int_{I}F_n(c)\, dc &= \int_{S\cap I}F_n(c)\, dc + \int_{\cL(I)-\cL(S\cap I)}F_n(c)\,dc \\
&\leqslant \cL(S\cap I)(1-\frac{1}{m})+(\cL(I)-\cL(S\cap I))K.
\end{align}
Note that the constant $K$ here only depends on $J$. Comparing \eqref{lower_bound} and \eqref{upper_bound}, we get
\begin{align}
	\label{S_I}
\cL(S\cap I)\leqslant \big( 1-\frac{1}{2mK-2m+2} \big)\cL(I).
\end{align}
Summing over $I_{\omega}$ and putting \eqref{S_I} into the sum we have
\begin{align*}
l&=\cL(S)=\cL\big(\bigcup_{\omega\in\Omega}(S\cap I)\big)\leqslant \sum_{\omega\in\Omega}\cL(S\cap I) \\
&\leqslant\big( 1-\frac{1}{2mK-2m+2} \big)\sum_{\omega\in\Omega}\cL(I)\\
&\leqslant \big( 1-\frac{1}{2mK-2m+2} \big)(l+\eps).
\end{align*}
This is impossible except that $l=0$. Here the implied constant $\varepsilon$ depends on $K$ and $m$.

\subsection{A sketch proof of Theorem~\ref{thm:main_3}}
Following the arguments of the proof of Proposition~\ref{prop:main} it can be easily proved for any $C>0$ and $\delta>0$, we have a $K=K(C,\delta)$ such that
$$
\Pi_{c,a}(x)\leqslant K\frac{x}{\log ^2 x},
$$ 
uniformly in $N\geqslant 3$, $0<c\leqslant C$ and $\| c \|\geqslant \delta>0$. Now we consider $\Pi_{c,a}(x)$ as a function of $c$ and define
\begin{equation}
\label{eq:Fup}
F_{x,a}(c)=\Pi_{c,a}(x)\bigg/ \big(\frac{x}{c\log ^2x}\big).
\end{equation}
We shall show that $F_{x,a}(c)$ is exact $1$ in average. 

Let $B> A > 0$. Similar to the last proof, we start with
\begin{align*}
\int_{A}^{B}F_{x,a}(c)dc &=\frac{\log^2x}{x}\int_{A}^{B}\sum_{p\leqslant x}\sum_{p^c-1<q+a\leqslant p^c}c \, dc  \nonumber \\
&=\frac{\log ^2x}{x}\sum_{p\leqslant x}\sum_{p^A<q+a\leqslant p^B}\int_{\frac{\log (q+a)}{\log p}}^{\frac{\log (q+a+1)}{\log p}}c \, dc \\
&=\frac{\log^2x}{x}\sum_{p\leqslant x}\sum_{p^A<q+a\leqslant p^B}\frac{\log q}{q\log ^2p}(1+o(1)).
\end{align*}
By a simple application of prime number theorem and a partial summation, it is obtained that
\begin{equation}
	\label{F_{x,a}(c)_2}
\int_{A}^{B}F_{x,a}(c)dc=(B-A)+o(1),
\end{equation}
where the implied constant depends $A$ and $B$ and tends to $0$ as $x$ tends to $+\infty$.

Similarly, we define that
\begin{equation*}
	S^*=\{ c>0: \lim\limits_{x\rightarrow\infty }\sup F_{x,a}(c)<1 \}=\bigcup_{N=0}^{\infty}\bigcup_{T=3}^{\infty}\bigcup_{m=3}^{\infty}\bigcup_{X=3}^{\infty}\bigcap_{x>X}^{\infty}S(J_N,m,x),
\end{equation*}
where
$$
J_N=[N+\frac{1}{T},N+1-\frac{1}{T}],
$$
and
$$
S(J_N,m,x)=S=\{ c\in J: F_{x,a}(c)\leqslant 1-\frac{1}{m} \}.
$$
In this part, we still aim to prove $\cL(S^*)=0$ to obtain Theorem~3. Suppose that $\cL(S)=l$. For any $\eps>0$, there is a finite cover $I_{\omega}$, $\omega\in\Omega$ and $\Omega$ is index set, such that
$$
S\subseteq \bigcup_{\omega\in\Omega}I_{\omega}\subseteq J,\quad \sum_{\omega\in\Omega} \cL(I)<l+\eps.
$$
Next we fix an $I\subseteq J$. By \eqref{F_{x,a}(c)_2} and \eqref{eq:Fup}, for large enough $x$ we still have an upper and lower bound 
\begin{equation}
	\label{lower_bound'}
	\int_{I}F_{x,a}(c)dc\geqslant \cL(I)(1-\frac{1}{2m})
\end{equation}
and
\begin{equation}
	\label{upper_bound'}
	\int_{I}F_{x,a}(c)\,dc\leqslant \cL(S\cap I)(1-\frac{1}{m})+( \cL(I) - \cL(S\cap I))KN.
\end{equation}
Note that the constant $K$ here only depends on $J$. Comparing \eqref{lower_bound'} and \eqref{upper_bound'}, we get
$$
\cL(S\cap I)\leqslant \big( 1-\frac{1}{2mKN-2m+2} \big) \cL(I),
$$
which is impossible except $l=0$ by summing over $I_{\omega}$. Here the implied constant $\varepsilon$ depends on $K$, $N$ and $m$. This completes the proof.

\section*{Acknowledgement}
This work was supported by the National Natural Science Foundation of China (No. 11901447, 12271422), the Natural Science Foundation of Shaanxi Province (No. 2024JC-YBMS-029). 


\end{document}